\newtheorem{thm}{Theorem}[section]
\newtheorem{prop}[thm]{Proposition}
\newtheorem{cor}[thm]{Corollary}
\newtheorem{lem}[thm]{Lemma}
\renewcommand{\part}{\partial}
\newcommand{\deq}{\mathrel{\mathop:}=}
\newcommand{\Set}{\mathbf{Set}}
\newcommand{\sSet}{\mathbf{sSet}}
\newcommand{\slice}[2]{{{#1}/{#2}}}
\newcommand{\Ho}[1]{\mathrm{Ho}({#1})}
\newcommand{\mc}[1]{\mathcal{#1}}
\newcommand{\cosk}{\mathrm{cosk}}
\newcommand{\hocolim}[1]{{\underrightarrow{\mathop{\mathrm{holim}}}}_{#1}}
\renewcommand{\lim}{\varprojlim}
\newcommand{\colim}{\varinjlim}
\newcommand{\Shv}[1]{\mathbf{Shv}({#1})}
\newcommand{\sShv}[1]{\mathbf{s}\Shv{#1}}
\newcommand{\ets}{\acute{e}t}
\newcommand{\Spec}[1]{\textrm{Spec}\,{#1}}
\newcommand{\nsimp}{\Delta^n}
\newcommand{\Exinf}[1]{\mathrm{Ex}^{\infty}\,{#1}}
\newcommand{\Exinfn}{\mathrm{Ex}^{\infty}}
\newcommand{\ccomp}{\scriptstyle{\Pi}\displaystyle}
\newcommand{\pr}{\mathrm{pr}}
\newcommand{\hra}{\hookrightarrow}
\newcommand{\Et}{\mathrm{\acute{E}t}}
\newcommand{\HR}{\mathrm{HR}}
\newcommand{\hyp}{\mathrm{hyp}}
\newcommand{\bihyp}{\mathrm{bihyp}}
\begin{document}
\title{\'Etale Homotopy Types and Bisimplicial Hypercovers}
\author{Michael D. Misamore}
\date{August 21, 2010}
\maketitle

\begin{abstract}
The ``\'etale'' homotopy type $T(X, z)$ of any geometrically pointed locally fibrant
connected simplicial sheaf $(X, z)$ on a pointed locally connected small
Grothendieck site $(\mc{C}, x)$ is defined in terms of the geometrically pointed
hypercovers of $X$. Here this type $T(X, z)$ is compared to the analogous \'etale
homotopy type $T_b(X, z)$ constructed by means of diagonals of pointed
\emph{bisimplicial} hypercovers of $x = (X, z)$. This comparison is given by means
of the associated cocycle categories (in the sense of Jardine), and it is shown that
there are bijections
\begin{displaymath}
\pi_0 H_{\hyp}(x, y) \cong \pi_0 H_{\bihyp}(x, y)
\end{displaymath}
at the level of path components for any locally fibrant target object $y$. This
quickly leads to natural pro-isomorphisms $T(X, z) \cong T_b(X, z)$ in
$\Ho{\sSet_\ast}$, so that in particular these pro-objects in $\Ho{\sSet_\ast}$ are
weakly equivalent in the sense suggested by Artin-Mazur. By consequence one
immediately establishes the fact that $T_b(X, z)$ is invariant up to pro-isomorphism
(not just Artin-Mazur weak equivalence) under pointed local weak equivalences of
simplicial sheaves. Analogous statements for the unpointed versions of these types
also follow. The proofs given here do not in any sense rely on a pro-Whitehead type
theorem, thereby simplifying previous work in this direction.

\medbreak
\noindent Keywords: \'Etale homotopy theory, simplicial sheaves.
\medbreak

\noindent 2000 {\em Mathematics Subject Classification}.
Primary 18G30; Secondary 14F35.
\end{abstract}

\section{Introduction}

In classical \'etale homotopy theory, the \emph{\'etale homotopy type} of a
geometrically pointed connected locally noetherian scheme $(X, z)$ is defined by
taking objectwise connected components $\ccomp (U, u)$ of the system of all pointed
hypercovers $(U, u) \to X$ of $X$ and pointed simplicial homotopy classes of maps
between them. This system is cofiltered and thus results in a pro-object of
$\Ho{\sSet_\ast}$, the homotopy category of pointed path-connected simplicial sets.
This is the classical \'etale homotopy type of Artin-Mazur (\cite{AM}). On the other
hand, if one starts with a geometrically pointed connected locally noetherian
\emph{simplicial} scheme $X$ then one has to make a choice about what types of
hypercovers of $X$ to consider. The choice taken by Friedlander was to define his
\emph{\'etale topological type} via the system of (rigid) pointed \emph{bisimplicial}
hypercovers $(U, u) \to X$ of $X$ and fibrewise (over $X$) pointed simplicial
homotopy classes of maps between them, where a \emph{bisimplicial hypercover}
$U \to X$ is a map of bisimplicial schemes such that each degreewise map
$U_n \to X_n$ is a hypercover of the scheme $X_n$ for $n \geq 0$. There one must
take diagonals $d(U, u)$ followed by connected components in order to produce a
pro-object of $\Ho{\sSet_\ast}$, and it is obvious that this specializes to the
Artin-Mazur definition for geometrically pointed schemes $X$ regarded as constant
simplicial schemes. In \cite{Isaksen3}, Isaksen introduced another model for this
homotopy type by taking the ``realization'' of the diagram of (rigid) \'etale
homotopy types $\Et(X_n)$ of the constituent schemes, and showed in
particular that his model (also taking values in pro-simplicial sets) is weakly
equivalent to Friedlander's. Regardless of the model in pro-simplicial sets, the
\emph{homotopy type} of interest may be taken to be the pro-object $T_b(X, z)$ of
$\Ho{\sSet_\ast}$ defined by means of connected components of diagonals of
geometrically pointed bisimplicial hypercovers of $X$.

There is another possible choice for a system of geometrically pointed hypercovers
of a geomerically pointed simplicial scheme: one knows (for various reasons) that
a pointed hypercover $(U, u) \to (X, z)$ should be defined as a pointed local
trivial fibration on the relevant site, so one may simply take these instead of the
bisimplicial hypercovers. These were apparently first considered for the purpose of
defining a homotopy type by Schmidt in \cite{Schmidt1}. From this point of view it
is natural to drop the requirement that $X$ be representable by a simplicial scheme
and instead consider the system of geometrically pointed hypercovers of $X$
and pointed simplicial homotopy classes of maps between them for any geometrically
pointed connected locally fibrant simplicial sheaf $(X, z)$, where the words
``geometrically pointed'' are suitably defined. One way or another, it has been
known for some time that this also results in a pro-object $T(X, z)$ of
$\Ho{\sSet_\ast}$; the underlying ideas go back to Brown's thesis \cite{Brown1}.

This raises the fundamental question of how to compare $T(X, z)$ with the analogous
type $T_b(X, z)$ defined above. Here this comparison is achieved by working at the
level of \emph{cocycle categories} in the sense of \cite{Jardine4}: it is shown here
(Theorem \ref{thmbijs}) that there are bijections
\begin{displaymath}
\pi_0 H_{\hyp}(x, y) \cong \pi_0 H_{\bihyp}(x, y) \cong [x, y]
\end{displaymath}
between the path components of the cocycle categories for ordinary and bisimplicial
hypercovers for any locally fibrant pointed simplicial sheaf $y$ on the ambient
small Grothendieck site $\mc{C}$. For these bijections one only requires that the
site $\mc{C}$ be pointed (and only if one desires to speak about \emph{pointed}
hypercovers). As these results really have no dependence on the \'etale topology
per se, they may be of general interest.

Different variants of abelian and nonabelian sheaf cohomology may then be recovered
from bisimplicial hypercovers by means of generalized Verdier hypercovering
arguments; for the sake of example, the identifications
\begin{displaymath}
H^n(X, H) \cong \colim_{p:\, d(U) \to X}{H^n(dU, H)}
\end{displaymath}
for sheaves of groups $H$ are established in Proposition \ref{verd1}. These results
are proven without spectral sequence arguments and work equally well for nonabelian
$H^1$.

Finally, it is shown here in Theorem \ref{mainthm} that $T_b(X, z)$ is indeed
pro-isomorphic to $T(X, z)$ whenever $(X, z)$ is a pointed connected locally fibrant
simplicial sheaf on a pointed locally connected site where the distinguished
``point'' is determined by some object $\Omega$ representing a sheaf (such as a
geometric point). The resulting invariance of $T_b(X, z)$ up to pro-isomorphism
under (pointed) local weak equivalences is the subject of Corollary \ref{maincor}.
The $\Exinfn$ functor is employed in Lemma \ref{exinf} to demonstrate that this
invariance holds without any fibrancy assumptions on $(X, z)$. One recovers in
particular the fact that a bisimplicial hypercover $U \to X$ determines a
pro-isomorphism $T_b(dU) \cong T_b(X)$ of bisimplicial \'etale homotopy types
(cf. $8.1$, \cite{Friedlander1}); the proof here is quite elementary and does not
make use of the pro-Whitehead Theorem from \S$4$, \cite{AM}.

\section{Hypercovers and bisimplicial hypercovers} \label{secthyp}

A map $U \to X$ of simplicial (pre)sheaves on a small Grothendieck site $\mc{C}$ is
called a \emph{hypercover} if it is a local fibration and a local weak equivalence
(cf. \cite{Jardine9} for a definition and discussion of the local right lifting
property defining local fibrations), and it is well known that when $X = K(X, 0)$
is the discrete or ``constant'' simplicial (pre)sheaf associated to an object $X$
of $\mc{C}$, the map $U \to X$ as above is a hypercover in this sense exactly when
the maps
\begin{eqnarray*}
U_0 & \to & X_0 \\
U_n & \to & \cosk_{n-1}{U_n}
\end{eqnarray*}
are local epimorphisms of (pre)sheaves on $\mc{C}$ for $n \geq 1$, which may be
taken as the ``classical'' definition. The fact that these definitions correspond
follows from ($1.12$, \cite{Jardine3}) in the case where $X$ is locally fibrant, or
by a Boolean localization argument in the general case, following Jardine
\cite{Jardine16}.

Suppose $\mc{C}$ is \emph{pointed} in the sense that there is a geometric morphism
\begin{displaymath}
x: \Set \to \Shv{\mc{C}}
\end{displaymath}
of toposes. For the purposes of this paper, a \emph{pointed simplicial sheaf}
$(X, z)$ on $\mc{C}$ will be a simplicial sheaf $X$ on $\mc{C}$ together with a
choice of section $z \in x^\ast(X_0)$, and a \emph{pointed map} $f: (X, z) \to
(Y, z^\prime)$ of pointed simplicial sheaves will be a map $X \to Y$ of the
underlying simplicial sheaves such that $x^\ast(f)(z) = z^\prime$. In the usual
geometric setting for \'etale homotopy theory, such ``points'' $z$ correspond to
geometric points of $X$ whenever $X = K(X, 0)$ is a discrete representable
simplicial sheaf. A \emph{pointed hypercover} $(U, u) \to (X, z)$ of a pointed
simplicial sheaf $(X, z)$ on a pointed small Grothendieck site $(\mc{C}, x)$ will
be a hypercover $U \to X$ that is a pointed map (with respect to $x$) of simplicial
sheaves, and a \emph{pointed map} of pointed hypercovers of $(X, z)$ will be a
pointed map over $X$ of the underlying simplicial sheaves. It has been observed
as early as \cite{Brown1} that the pointed hypercovers of any \emph{locally fibrant}
pointed simplicial sheaf $(X, z)$ together with pointed simplicial homotopy classes
of maps between them over $(X, z)$ form a cofiltered category, here denoted
$\HR_\ast(X, z)$.

A \emph{bisimplicial hypercover} $f: U \to X$ of a simplicial (pre)sheaf $X$ on a
small Grothendieck site $\mc{C}$ is a map of bisimplicial (pre)sheaves
$f: U \to K_v(X, 0)$, where $X$ is being regarded as simplicially discrete in the
``vertical'' direction, such that each of the constituent maps $f_m: U_m \to X_m$
in ``horizontal'' degree $m \geq 0$ is a hypercover. If $(\mc{C}, x)$ is a pointed
site then a \emph{pointed bisimplicial hypercover} $f: (U, u) \to (X, z)$ of a 
pointed simplicial sheaf $(X, z)$ is a bisimplicial hypercover $f: U \to X$ in
sheaves such that $x^\ast(f)(u) = z$, where $u \in x^\ast(U_{0,0})$ is the ``point''
associated to $U$. A \emph{pointed map} $f: (V, v) \to (U, u)$ of pointed
bisimplicial hypercovers of a pointed simplicial sheaf $(X, z)$ is a map of
bisimplicial hypercovers $f: V \to U$ over $X$ such that $x^\ast(f)(v) = u$. The
most significant fact about these objects for the present purposes is given by

\begin{prop} \label{diagwe}
If $f: (U, u) \to (X, z)$ is a bisimplicial hypercover of a pointed simplicial sheaf
$(X, z)$ on a pointed small Grothendieck site $(\mc{C}, x)$ then the map $(dU, u)
\to (X, z)$ of simplicial sheaves induced by taking diagonals is a local weak
equivalence.
\end{prop}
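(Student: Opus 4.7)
My plan is to reduce the claim to a classical fact about bisimplicial sets by means of Boolean localization, in the spirit of the argument already cited from \cite{Jardine16} in the excerpt. Fix a Boolean localization $p\colon \Shv{\mc{B}} \to \Shv{\mc{C}}$; by Jardine's theorem, a map of simplicial sheaves on $\mc{C}$ is a local weak equivalence if and only if its pullback along $p$ is a sectionwise weak equivalence on $\mc{B}$. The inverse image $p^*$ is exact and preserves local epimorphisms, while both $\cosk_n$ and $K_v(-,0)$ commute with $p^*$, so $p^*$ preserves (bisimplicial) hypercovers; moreover $p^*$ commutes with the diagonal. It will therefore suffice to show that after pullback and evaluation at any section $B \in \mc{B}$, the map $d(p^*U)(B) \to (p^*X)(B)$ is a weak equivalence of simplicial sets.

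I would then invoke the following standard bisimplicial fact: if $f\colon A \to C$ is a map of bisimplicial sets such that $C$ is vertically discrete (i.e. $C = K_v(C_*, 0)$) and each horizontal-degree map $f_m\colon A_m \to C_m$ is a weak equivalence of simplicial sets in the vertical direction, then the induced map $df\colon dA \to dC = C$ is a weak equivalence. This is an instance of the bisimplicial realization lemma (Bousfield--Friedlander, Appendix B; see also Goerss--Jardine Ch.~IV, \S1). One way to prove it is to identify $dA$ up to canonical weak equivalence with the homotopy colimit of the horizontal diagram $[m] \mapsto A_m$ of vertical simplicial sets and observe that homotopy colimits preserve objectwise weak equivalences; since $C = K_v(C_*, 0)$ is likewise a homotopy colimit of its horizontal diagram of discrete simplicial sets, the induced map on homotopy colimits is a weak equivalence.

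Combining the two steps, the pullback $p^*(dU \to X)$ is a sectionwise weak equivalence on $\mc{B}$, hence $dU \to X$ is a local weak equivalence on $\mc{C}$.

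The hard part will not be the abstract realization lemma (which is classical) but rather the bookkeeping in the reduction step: verifying that ``hypercover in every horizontal degree'' is genuinely preserved by the Boolean-localization pullback $p^*$, and that sectionwise weak equivalence on $\mc{B}$ after taking diagonals really translates back to local weak equivalence on $\mc{C}$. Once these technicalities are in hand, the proof reduces to an appeal to the standard result and the conclusion follows immediately.
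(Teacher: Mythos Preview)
Your proposal is correct and follows essentially the same route as the paper: reduce via a Boolean localization $p$ to a sectionwise statement on $\mc{B}$, then invoke the standard bisimplicial fact (the paper cites Goerss--Jardine IV, $1.7$) that a degreewise weak equivalence of bisimplicial sets with vertically discrete target has a weakly equivalent diagonal. The paper compresses your two steps into two lines and treats the bookkeeping you flag as routine (``by techniques of \cite{Jardine16}''), but the argument is the same.
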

\begin{proof}
Fixing a Boolean localization $p: \Shv{\mc{B}} \to \Shv{\mc{C}}$, it suffices to
show that the induced map $p^\ast(dU) \to p^\ast(X)$ is a sectionwise weak
equivalence by techniques of \cite{Jardine16}, but this follows from the
corresponding fact for simplicial sets ($1.7$, IV, \cite{GJ}). The pointedness of
the induced map is trivial.
\end{proof}
This Proposition and ($8.1$, \cite{Friedlander1}) serve as motivation for asking
whether \emph{any} local weak equivalence induces isomorphisms on the \'etale
homotopy pro-groups of the associated bisimplicial \'etale homotopy types. Reader
beware: the diagonal of a pointwise fibration of simplicial sets need \emph{not} be
a fibration in general, so one does not generally expect the diagonal of a
bisimplicial hypercover to be a hypercover. This is a source of technical problems
when it comes to comparing the differing definitions of \'etale homotopy types.

\section{The \'Etale Homotopy Type of a Simplicial Sheaf}

Say that a small Grothendieck site $\mc{C}$ is \emph{locally connected} if there
exists a functor $\ccomp: \Shv{\mc{C}} \to \Set$ left adjoint to the constant sheaf
functor $\Gamma^\ast: \Set \to \Shv{\mc{C}}$, and say that a locally connected site
$\mc{C}$ is \emph{connected} if $\ccomp(\ast) = \ast$ where $\ast$ denotes the
terminal sheaf on $\mc{C}$. In geometric situations the functor $\ccomp$ is that
induced by the functor which sends any scheme to its set of connected
scheme-theoretic components. A simplicial sheaf $X$ on a connected site $\mc{C}$
will be called \emph{connected} if $\pi_0 \ccomp(X) \cong \ast$; a quick argument
using $H^0(-, K(\Gamma^\ast S, 0))$ for variable sets $S$ shows that $U$ is
connected whenever $U \to X$ is a hypercover of a connected simplicial sheaf $X$,
and a similar statement is true for diagonals of bisimplicial hypercovers by the
same argument.

Suppose $(\mc{C}, x)$ is a pointed locally connected small Grothendieck site and
that $(X, z)$ is a pointed (with respect to $x$) connected locally fibrant
simplicial sheaf on $\mc{C}$. Then the pointed hypercovers of $(X, z)$ are
cofiltered up to simplicial homotopy so one may proceed to define an ``\'etale''
\emph{homotopy type} $T(X, z)$ for $(X, z)$: it is the pro-object in
$\Ho{\sSet_\ast}$ given by applying $\ccomp$ to the cofiltered diagram
$\HR_\ast(X, z)$ of pointed hypercovers of $(X, z)$ and pointed simplicial homotopy
classes of maps between them. This definition applies in particular to pointed
locally fibrant connected simplicial schemes $(X, z)$ on \'etale sites, and is
clearly \emph{not} the same as the \'etale topological type of Friedlander defined
by means of diagonals of (rigid) pointed bisimplicial hypercovers of $(X, z)$.
Nevertheless $T(X, z)$ has several good properties: firstly, it specializes to
the classical \'etale homotopy type for geometrically pointed connected schemes
(this is a matter of checking definitions). Secondly, the fact that it is defined
in terms of not-necessarily-representable hypercovers does not matter:

\begin{prop} \label{compclass}
For any geometrically pointed connected scheme $(X, z)$ on a locally connected
\'etale site, the \'etale homotopy type $T(X, z)$ defined here is pro-isomorphic
to the classical \'etale homotopy type $\Et(X, z)$ of Artin-Mazur defined by means
of pointed \emph{representable} hypercovers of $(X, z)$.
\end{prop}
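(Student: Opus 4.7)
The plan is to realize $\Et(X,z)$ as the restriction of the pro-diagram defining $T(X,z)$ to the full subcategory $\HR_\ast^{rep}(X,z) \subseteq \HR_\ast(X,z)$ spanned by the pointed hypercovers $(V,v) \to (X,z)$ whose constituent sheaves $V_n$ are representable as coproducts of étale schemes over $X$, and then to verify that this inclusion is cofinal in the pro-sense. Both categories are cofiltered, so cofinality reduces to a single existence statement: every pointed hypercover $(U,u) \to (X,z)$ receives a pointed map from a pointed representable hypercover $(V,v) \to (X,z)$ in $\HR_\ast(X,z)$. Once this is available, any two parallel pointed maps $V \rightrightarrows U$ with $V$ representable can first be equalized in $\HR_\ast$ by cofilteredness of the ambient category and then refined through a representable by applying existence once more, which completes the verification of cofinality.

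The existence statement I would establish by a pointed variant of the classical Verdier hypercover construction, carried out inductively on simplicial degree. At level zero, take $V_0 \to U_0$ to be a local epimorphism from a coproduct of étale schemes over $X$, arranged to include an étale neighborhood of $z$ through which $u$ factors; the point $v_0$ is then forced. Inductively, given a pointed representable truncation $V_{\leq n-1}$ refining $U_{\leq n-1}$, form the matching sheaf $W_n = U_n \times_{\cosk_{n-1}U_n} \cosk_{n-1}V_n$ and choose a local epimorphism $V_n \to W_n$ from a representable coproduct containing a lift of the evident image of the distinguished point. The $\cosk$-epimorphism criterion recalled in Section \ref{secthyp} then guarantees that the resulting $V \to X$ is a representable pointed hypercover admitting a pointed map $V \to U$ over $X$.

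Once cofinality of $\HR_\ast^{rep}(X,z) \hookrightarrow \HR_\ast(X,z)$ is in hand, applying $\ccomp$ objectwise yields the desired pro-isomorphism $T(X,z) \cong \Et(X,z)$ in $\pro{\Ho{\sSet_\ast}}$, because by construction $\ccomp$ applied to the restricted cofiltered system recovers $\Et(X,z)$, while applied to the full system it gives $T(X,z)$.

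The principal technical obstacle should be the inductive point-lifting: at each stage the chosen representable cover $V_n \to W_n$ must admit the distinguished point through the stalk functor $x^\ast$. I would handle this by always adjoining an étale neighborhood of the geometric point at each step, exploiting the fact that $x^\ast$ carries local epimorphisms of sheaves to surjections of sets. Beyond this point-tracking issue, the argument relies only on the standard fact that every sheaf on the étale site admits a local epimorphism from a coproduct of representables, together with the equivalence between the local-fibration and $\cosk$-epimorphism definitions of a hypercover recalled in Section \ref{secthyp}.
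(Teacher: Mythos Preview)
Your proposal is correct and follows essentially the same approach as the paper: reduce to a cofinality argument in $\HR_\ast(X,z)$ by showing every pointed hypercover admits a pointed representable refinement, and obtain that refinement via the inductive Verdier-type construction. The paper's proof is simply more terse, citing Jardine and Artin--Mazur (\S8) for the refinement construction rather than sketching the coskeletal induction and point-lifting as you do.
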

\begin{proof}
The only point is to show that any pointed hypercover $(U, u) \to (X, z)$ of a
scheme $X$ can be refined by a pointed representable hypercover (as then the result
follows by a cofinality argument in $\HR_\ast(X, z)$). This construction was given
by Jardine in \cite{Jardine8} based on the work of Artin-Mazur (\S$8$, \cite{AM}).
\end{proof}

Next, one may show directly that the type $T(U, u)$ is pro-isomorphic to the type
$T(X, z)$ for any pointed hypercover $(U, u) \to (X, z)$ of $(X, z)$. The
corresponding (actually weaker) statement for the \'etale topological type of
Friedlander requires some work to establish (cf. $8.1$, \cite{Friedlander1}) but
is easy to prove for $T(-, -)$:
\begin{lem} \label{lemhyp}
Suppose $(\mc{C}, x)$ is a pointed locally connected small Grothendieck site,
$(X, z)$ a pointed connected locally fibrant simplicial sheaf on $\mc{C}$, and
$f: (U, u) \to (X, z)$ a pointed hypercover of $(X, z)$. Then $f$ induces a
pro-isomorphism $T(U, u) \cong T(X, z)$.
\end{lem}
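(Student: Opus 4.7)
The plan is to define a post-composition functor $f_\ast: \HR_\ast(U, u) \to \HR_\ast(X, z)$ sending a pointed hypercover $p: (V, v) \to (U, u)$ to the composite pointed hypercover $f \circ p: (V, v) \to (X, z)$, and then to verify that $f_\ast$ is cofinal between these two cofiltered categories. Cofinality then yields the desired pro-isomorphism $T(U, u) \cong T(X, z)$ immediately upon applying $\ccomp$ objectwise to both indexing diagrams. The functor $f_\ast$ is well-defined because local trivial fibrations are closed under composition, pointedness is preserved ($x^\ast(fp)(v) = x^\ast(f)(u) = z$), and a pointed simplicial homotopy of maps over $U$ is a fortiori a pointed simplicial homotopy of the same maps over $X$.

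Cofinality between cofiltered categories reduces to two standard conditions. For the object-level condition, given any pointed hypercover $(W, w) \to (X, z)$, I would form the sheaf-theoretic pullback $V \deq U \times_X W$, pointed by the element $(u, w) \in x^\ast(U) \times_{x^\ast(X)} x^\ast(W) = x^\ast(V)$. Since pullback preserves local trivial fibrations, the first projection $V \to U$ is a pointed hypercover, placing $(V, (u,w))$ in $\HR_\ast(U, u)$; the second projection $V \to W$ is a pointed map over $X$ and defines the desired morphism $f_\ast(V, (u,w)) \to (W, w)$ in $\HR_\ast(X, z)$.

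For the morphism-level condition, given parallel morphisms $g_1, g_2: f_\ast(V, v) \rightrightarrows (W, w)$ in $\HR_\ast(X, z)$, I would invoke the construction underlying the cofilteredness of $\HR_\ast(X, z)$ (cf.~\cite{Brown1}, \cite{Jardine8}) to produce a refinement $q: V' \to V$ which is itself a local trivial fibration and which satisfies $g_1 q \simeq g_2 q$ as pointed simplicial homotopy classes over $X$. Since both $q$ and $V \to U$ are hypercovers, the composite $V' \to V \to U$ is again a hypercover, so $(V', v') \in \HR_\ast(U, u)$; the map $q$ then supplies an equalizing morphism in $\HR_\ast(U, u)$ whose image under $f_\ast$ does the job.

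The main obstacle will be this second step: one must ensure that the equalizing refinement of $V$ can be chosen as a genuine hypercover, since this is what lets us promote $V'$ back up to an object of $\HR_\ast(U, u)$ and not merely view it as an arbitrary map between hypercovers of $X$. This rests on the classical path-object construction — forming the pullback of $W^{\Delta^1} \to W \times W$ (a local fibration, since $W$, as a hypercover of the locally fibrant $X$, is itself locally fibrant) along $(g_1, g_2): V \to W \times W$ and then taking a suitable hypercover refinement of $V$ lifting into this pullback — which is precisely the construction that makes $\HR_\ast(X, z)$ cofiltered at the level of pointed simplicial homotopy classes in the first place, so no machinery beyond that already used to define $T(X,z)$ will be needed.
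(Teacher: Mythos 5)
Your argument is correct, but it follows a different route from the paper. You verify cofinality of the post-composition functor $f_\ast: \HR_\ast(U, u) \to \HR_\ast(X, z)$ directly, handling the object-level condition by pulling back an arbitrary pointed hypercover $(W,w) \to (X,z)$ along $f$ to get $U \times_X W \to U$ (legitimate, since local trivial fibrations and pointings are stable under pullback and $x^\ast$ is exact), and the morphism-level condition by the fibrewise-path-object equalizing construction. The paper instead works with the slice category $\slice{\HR_\ast(X, z)}{f}$: it never forms pullbacks, but uses the mapping-path-space factorization of an arbitrary map $(V,v) \to (U,u)$ over $X$ into a section of a local trivial fibration followed by a local trivial fibration, thereby exhibiting a full cofinal subcategory equivalent to $\HR_\ast(U,u)$, and then composes with the (automatically cofinal, since $\HR_\ast(X)$ is cofiltered) forgetful functor to $\HR_\ast(X,z)$. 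Your pullback step is the more elementary way to solve the object condition; the paper's factorization trick is doing double duty, since it is also what converts the homotopy-equalizer $(E,e) \to (U,u)$ back into a hypercover of $U$ in the morphism-level step. One small point of care in your last paragraph: to get an equalizing refinement $q: V' \to V$ that is itself a local \emph{trivial} fibration (as you need in order to promote $V'$ to an object of $\HR_\ast(U,u)$), you should use the fibrewise path object $W^{\Delta^1} \times_{X^{\Delta^1}} X \to W \times_X W$ rather than $W^{\Delta^1} \to W \times W$; the former is a local trivial fibration precisely because $W \to X$ is one, so its pullback along $(g_1, g_2)$ already is the desired hypercover refinement, with no further lifting needed. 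With that adjustment your proof closes up completely and uses no machinery beyond what the paper already assumes for the cofilteredness of $\HR_\ast(X,z)$.
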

\begin{proof}
Consider the slice category $\slice{\HR_\ast(X, z)}{f}$ whose objects are the
commutative triangles
\begin{displaymath}
\xymatrix{ (V, v) \ar[rr]^g \ar[dr]_h &  & (U, u) \ar[dl]^f \\
           &  (X, z)  &  }
\end{displaymath}
over $(X, z)$ where $h$ is a hypercover. The maps $g$ may not be hypercovers
themselves, but as any such $V$ and $U$ are locally fibrant (since $X$ is), any
such object has a functorial refinement up to weak equivalence by an object
$g^\prime: (Z, z) \to (U, u)$ over $(X, z)$ such that $g^\prime$ is a pointed
hypercover of $(U, u)$: this is determined by the usual factorization
\begin{displaymath}
\xymatrix{  &  (Z, z) \ar@/_2ex/[dl]_k \ar[dr]^{g^\prime} & \\
          (V, v) \ar[rr]^g \ar[ur]_w  &  &  (U, u) }
\end{displaymath}
where $g^\prime$ is a local fibration (and local weak equivalence by closed model
axiom CM$2$) and $w$ is a local weak equivalence that is a section of the local
trivial fibration $k$. This determines a full cofinal subcategory
\begin{displaymath}
i: (\slice{\HR_\ast(X)}{f})_{\mathrm{hyp}} \hra \slice{\HR_\ast(X)}{f}
\end{displaymath}
as one can see by equalizing any two maps of pointed hypercovers over $X$ up to
pointed simplicial homotopy and replacing the resulting equalizer
$(E, e) \to (U, u)$ over $X$ by a hypercover $(E^\prime, e^\prime) \to (U, u)$ by
factorization as above. There is an equivalence $f_\ast: \HR_{\ast}(U) \simeq
(\slice{\HR_\ast(X)}{f})_{\mathrm{hyp}}$ defined by composing with $f$ or likewise
forgetting the maps to $X$. Further, the functor $p: \slice{\HR_\ast(X)}{f} \to
\HR_\ast(X)$ defined by forgetting the maps to $f$ is cofinal since $\HR_\ast(X)$
is cofiltered. It follows that the composite
\begin{displaymath}
\HR_\ast(U) \xrightarrow{f_\ast} (\slice{\HR_\ast(X)}{f})_{\mathrm{hyp}}
   \xrightarrow{pi} \HR_{\ast}(X)
\end{displaymath}
is cofinal, and this composite induces the desired pro-isomorphism.
\end{proof}

This cofinality argument works essentially because $T(-, -)$ treats both the base
simplicial sheaf $X$ and its pointed hypercovers on the same footing; such an
argument therefore fails for \emph{bisimplicial} hypercovers of simplicial sheaves.
It almost immediately follows that $T(X, z)$ is invariant up to pro-isomorphism
under pointed local weak equivalences:
\begin{prop} \label{invwe}
Suppose $(\mc{C}, x)$ is a pointed locally connected small Grothendieck site,
$(X, z)$ and $(Y, y)$ pointed connected locally fibrant simplicial sheaves on
$\mc{C}$, and $f: (Y, y) \to (X, z)$ a pointed local weak equivalence. Then $f$
induces a pro-isomorphism $T(Y, y) \cong T(X, z)$.
\end{prop}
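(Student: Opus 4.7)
The plan is to produce a pointed connected locally fibrant simplicial sheaf $(P, p)$ on $\mc{C}$ equipped with \emph{two} pointed hypercovers $q_Y: (P, p) \to (Y, y)$ and $q_X: (P, p) \to (X, z)$, and then to apply Lemma \ref{lemhyp} twice: the resulting pro-isomorphisms $T(P, p) \cong T(Y, y)$ and $T(P, p) \cong T(X, z)$ will compose to give the desired pro-isomorphism $T(Y, y) \cong T(X, z)$ induced by $f$.

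To construct such a $(P, p)$, I would consider the pointed graph map of $f$,
\[
g := (\Id_Y, f): (Y, y) \to (Y, y) \times (X, z),
\]
a pointed local weak equivalence whose target is locally fibrant (the product of locally fibrant simplicial sheaves is locally fibrant). Factor $g$ via the mapping path space construction familiar from the proof of Lemma \ref{lemhyp} as a pointed local trivial cofibration $j: (Y, y) \to (P, p)$ followed by a pointed local fibration $q: (P, p) \to (Y, y) \times (X, z)$, with $p := j(y)$. Let $q_Y$ and $q_X$ denote the composites of $q$ with the projections $Y \times X \to Y$ and $Y \times X \to X$; these projections are themselves local fibrations (as $X$ and $Y$ are locally fibrant), so $q_Y$ and $q_X$ are pointed local fibrations.

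By construction $q_Y \circ j = \Id_{(Y, y)}$ and $q_X \circ j = f$ are pointed local weak equivalences, and since $j$ is too, CM2 forces $q_Y$ and $q_X$ to be local weak equivalences as well, and hence pointed hypercovers of $(Y, y)$ and $(X, z)$ respectively. Local fibrancy of $P$ follows from the local fibration $q: P \to Y \times X$ into a locally fibrant object, and connectedness of $P$ follows from that of $Y$ via the hypercover $q_Y$, by the observation at the start of this section that hypercovers of connected simplicial sheaves are connected. Two applications of Lemma \ref{lemhyp} then finish the argument.

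The main obstacle I foresee is purely bookkeeping: verifying that the mapping path space factorization of $g$ can be carried out compatibly with the pointings, so that $q_Y$ and $q_X$ are genuinely pointed maps of pointed simplicial sheaves. This is handled by taking $p := j(y)$ as the canonical pointing of $P$ and using naturality of the factorization; no new model-categorical input is needed beyond what is already invoked in the proof of Lemma \ref{lemhyp}.
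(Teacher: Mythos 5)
Your argument is correct and is essentially the paper's own proof: the paper factors $f$ through the mapping path object $(W,w)$ as a section $i$ of a pointed hypercover $g: W \to Y$ together with a pointed hypercover $h: W \to X$ satisfying $hi = f$, and then applies Lemma \ref{lemhyp} to $g$ and $h$; your $(P, p)$, $j$, $q_Y$, $q_X$ play exactly the roles of $W$, $i$, $g$, $h$. The only point worth spelling out, as the paper does, is that $f_\ast = (q_X)_\ast j_\ast$ with $j_\ast$ a two-sided inverse to the pro-isomorphism $(q_Y)_\ast$, so that the resulting pro-isomorphism is genuinely the one induced by $f$.
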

\begin{proof}
As $Y$ and $X$ are locally fibrant there is a factorization
\begin{displaymath}
\xymatrix{  &  (W, w) \ar@/_2ex/[dl]_g \ar[dr]^h & \\
           (Y, y) \ar[ur]_i \ar[rr]^f  &  &  (X, z) }
\end{displaymath}
of the map $f$ where $g$ and $h$ are pointed hypercovers and $i$ is right inverse
to $g$. The induced map $i_\ast: T(Y, y) \to T(W, w)$ is right inverse to the
induced map $g_\ast$, which is a pro-isomorphism by Lemma \ref{lemhyp}, so $i_\ast$
is also left inverse to $g_\ast$ and a pro-isomorphism. The map $h_\ast$ is also a
pro-isomorphism by the Lemma so $h_\ast i_\ast = f_\ast: T(Y, y) \to T(X, z)$ is a
pro-isomorphism.
\end{proof}

In an earlier work (\cite{Schmidt1}), Schmidt observed that this latter fact is
implied by an application of the generalized Verdier hypercovering theorem; the
present proof is included because it may be of independent interest. To give
another comparison with known results, recall that in ($2$, \cite{Isaksen3})
Isaksen shows this his \'etale realization functor $\mathrm{Re}^{\ets}$ is left
Quillen for the local projective structure so that it sends local weak
equivalences between local projective cofibrant simplicial presheaves to weak
equivalences of pro-simplicial sets. The type $T(-, -)$ defined here sends
pointed local weak equivalences between pointed connected locally fibrant
simplicial sheaves to pro-isomorphisms in $\Ho{\sSet_\ast}$. Obviously the
points are only there to make a comparison of homotopy pro-groups: this general
line of argument continues to work in the unpointed case.

\section{Cocycles for bisimplicial sheaves}

In what follows $\mc{C}$ will be an arbitrary small Grothendieck site and
$\Set \to \Shv{\mc{C}}$ a point of $\mc{C}$ (or no point at all in the unpointed
situation). Familiarity with the definitions of \cite{Jardine4} and \cite{Jardine13}
will be assumed. The word ``pointed'' will always mean with respect to the chosen
point of $\mc{C}$ rather than with respect to the terminal sheaf. Pointed
(bi)simplicial sheaves on $\mc{C}$ will be denoted from now on simply with lowercase
letters to reduce the notational burden, and unless otherwise specified any map
$x \to y$ between pointed (bi)simplicial sheaves will be pointed. The underlying
site will always be $\mc{C}$.

For any two pointed simplicial sheaves $x$, $z$ on $\mc{C}$ there is a category
$H_{\bihyp}(x, z)$ of cocycles of the form
\begin{displaymath}
x \xleftarrow{d(f)\, \simeq} d(u) \xrightarrow{p} z
\end{displaymath}
where $f: u \to x$ is a pointed bisimplicial hypercover of $x$ and $d$ is the
diagonal functor, whose morphisms are commutative diagrams
\begin{displaymath}
\xymatrix@=20pt{  &  d(u) \ar[dl]_{d(f)} \ar[dr]^p \ar[dd]^{d(m)} &  \\
           x &   &  z \\
	    &  d(u^\prime) \ar[ul]^{d(f^\prime)} \ar[ur]_{p^\prime} & }
\end{displaymath}
where $m: u \to u^\prime$ is any pointed map of bisimplicial hypercovers of $x$.

As these categories turn out to be a bit tricky to study directly, one may also
consider categories $H_d(x, z)$ whose objects are cocycles of the form
\begin{displaymath}
x \xleftarrow{d(f)\, \simeq} d(u) \xrightarrow{p} z
\end{displaymath}
where $f: u \to x$ is any pointed map of bisimplicial sheaves that is a
\emph{diagonal local weak equivalence} in the sense that $d(f)$ is a local weak
equivalence of simplicial sheaves, and whose morphisms are similarly defined; it
is immediate from the definition that $H_{\bihyp}(x, z)$ is a full subcategory
of $H_d(x, z)$ for any fixed $x$ and $z$.

Recall the Moerdijk closed model structure for bisimplicial sets: the
\emph{fibrations} (resp. \emph{weak equivalences}) are by definition the diagonal
fibrations (resp. diagonal weak equivalences), and the cofibrations are defined
by the left lifting property with respect to all trivial fibrations
(cf. $3.15$, IV, \cite{GJ}). Every Moerdijk cofibration is a monomorphism of
bisimplicial sets and therefore a diagonal cofibration in particular.

The diagonal functor $d$ has a right adjoint $d_\ast$ ($3.13$, IV, \cite{GJ}) so
that any object
\begin{displaymath}
x \xleftarrow{d(f)\, \simeq} d(u) \xrightarrow{p} z
\end{displaymath}
of $H_d(x, z)$ is uniquely identified with a diagram
\begin{displaymath}
x \xleftarrow{f} u \xrightarrow{\tilde{p}} d_\ast(z)
\end{displaymath}
where $f$ is the underlying map of bisimplicial sheaves and $\tilde{p}$ is the
adjoint of $p$. In other words any such cocycle $(d(f), p)$ may be identified with
a ``cocycle'' $(f, \tilde{p})$.

\begin{lem} \label{mordfib}
If $f: X \to Y$ is a trivial fibration of simplicial sets then the induced map
$d_\ast(f): d_\ast X \to d_\ast Y$ is a trivial fibration for the Moerdijk closed
model structure on bisimplicial sets.
\end{lem}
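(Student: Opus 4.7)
The plan is to characterize Moerdijk trivial fibrations via the right lifting property against Moerdijk cofibrations, rather than working directly with the definition (diagonal trivial fibration), and then to use the adjunction $d \dashv d_\ast$ to reduce the whole problem to a simplicial-set lifting problem where the hypothesis on $f$ can be applied. The engine of the argument is the observation, already recorded just before the lemma, that every Moerdijk cofibration is a monomorphism of bisimplicial sets.

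First I would reduce to verifying that $d_\ast(f)$ has the RLP against an arbitrary Moerdijk cofibration $i \colon A \hookrightarrow B$; in a closed model category this is equivalent to $d_\ast(f)$ being a trivial fibration in the Moerdijk structure. Given such a lifting problem
$$
\xymatrix{
A \ar[r] \ar[d]_i & d_\ast X \ar[d]^{d_\ast(f)} \\
B \ar[r] & d_\ast Y,
}
$$
the adjunction $d \dashv d_\ast$ transposes it to the equivalent simplicial-set lifting problem
$$
\xymatrix{
dA \ar[r] \ar[d]_{d(i)} & X \ar[d]^f \\
dB \ar[r] & Y.
}
$$
Since $i$ is a monomorphism of bisimplicial sets and the diagonal acts in each degree by evaluation at the diagonal bidegree $(n,n)$, the map $d(i)$ is a monomorphism of simplicial sets; and $f$ being a trivial fibration of simplicial sets then supplies the desired lift, which transposes back to a lift in the original diagram.

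This gives the RLP against every Moerdijk cofibration, and hence the lemma. I do not expect any genuine obstacle here: the only step that deserves care is the initial reduction to RLP against cofibrations, which relies on the standard closed model axioms for the Moerdijk structure, together with the already cited fact that Moerdijk cofibrations sit inside the monomorphisms (so that $d$ does carry them to monomorphisms of simplicial sets, where the hypothesis on $f$ bites).
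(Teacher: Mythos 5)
Your argument is correct and is essentially identical to the paper's proof: both reduce to the right lifting property against Moerdijk cofibrations, transpose via the adjunction $d \dashv d_\ast$, and use that a Moerdijk cofibration is a monomorphism of bisimplicial sets, so its diagonal is a cofibration of simplicial sets against which the trivial fibration $f$ lifts. No further comment is needed.
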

\begin{proof}
It must be shown that $d_\ast(f)$ has the right lifting property with respect to any
Moerdijk cofibration $i: A \to B$. By adjointness, this is equivalent to showing
that $f$ has the right lifting property with respect to $d(i)$, but $d(i)$ is a
cofibration of simplicial sets since $i: A \to B$ is in particular a monomorphism of
bisimplicial sets ($3.15$, IV, \cite{GJ}). The required lift exists since $f$ is a
trivial fibration.
\end{proof}

\begin{cor} \label{diagequiv}
If $f: X \to Y$ is a weak equivalence of fibrant simplicial sets then the induced
map $d_\ast(f): d_\ast X \to d_\ast Y$ is a diagonal weak equivalence of
bisimplicial sets.
\end{cor}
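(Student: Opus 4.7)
The strategy is to apply Ken Brown's lemma to $d_\ast$ regarded as a functor from simplicial sets (with their standard model structure) to bisimplicial sets (with the Moerdijk model structure). Since Moerdijk trivial fibrations are by definition diagonal weak equivalences, Lemma \ref{mordfib} already gives more than enough: it shows that $d_\ast$ sends every trivial fibration of simplicial sets to a diagonal weak equivalence of bisimplicial sets, and in particular does so between fibrant simplicial sets. This is exactly the hypothesis required for the (dual) Ken Brown lemma, which then immediately yields that $d_\ast$ takes every weak equivalence between fibrant simplicial sets to a diagonal weak equivalence.

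To execute Ken Brown's argument explicitly in this setting, one takes a weak equivalence $f: X \to Y$ between fibrant simplicial sets and factors the map $(1_X, f): X \to X \times Y$ as a trivial cofibration $j: X \to Z$ followed by a fibration $q: Z \to X \times Y$. Since $X$ and $Y$ are fibrant, so is $X \times Y$, and hence so is $Z$. The two composites $p_1 q: Z \to X$ and $p_2 q: Z \to Y$ are then fibrations between fibrant simplicial sets; the identities $p_1 q \circ j = 1_X$ and $p_2 q \circ j = f$ together with two-out-of-three force both to be weak equivalences, hence trivial fibrations. Applying Lemma \ref{mordfib} converts these into diagonal weak equivalences of bisimplicial sets, and one last application of two-out-of-three for diagonal weak equivalences to the decomposition $d_\ast(f) = d_\ast(p_2 q) \circ d_\ast(j)$ produces the conclusion.

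The argument is essentially formal once Lemma \ref{mordfib} is in hand, so I do not anticipate any real technical obstacle; the fibrancy hypothesis on $X$ and $Y$ is used only to guarantee fibrancy of $Z$, ensuring that the composites $p_i q$ are genuinely fibrations between fibrant objects and thus trivial fibrations to which the preceding lemma applies.
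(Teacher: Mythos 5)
Your proposal is correct and is essentially the paper's own argument: the paper also invokes Brown's factorization lemma, writing $f$ as a weak equivalence $\sigma$ (a section of a trivial fibration $h$) followed by a trivial fibration $g$, and then applies Lemma \ref{mordfib} to $h$ and $g$ exactly as you do to $p_1q$ and $p_2q$. You merely spell out the construction of that factorization, which the paper leaves implicit.
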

\begin{proof}
Factor the map $f$ as a weak equivalence $\sigma$ followed by trivial fibration $g$
such that $\sigma$ is a section of a trivial fibration $h$. Then $d_\ast(h)$ and
$d_\ast(g)$ are trivial fibrations for the Moerdijk structure by Lemma
\ref{mordfib}, so $d_\ast(f)$ is a diagonal equivalence.
\end{proof}

The ``localized'' version of this is then given by
\begin{lem} \label{locdequiv}
If $\beta: z \to z^\prime$ is a local weak equivalence of pointed locally fibrant
simplicial sheaves then $d_\ast(\beta)$ is a pointed diagonal local weak
equivalence.
\end{lem}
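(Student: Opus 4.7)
The plan is to localize the argument of Corollary \ref{diagequiv} by first proving a local analogue of Lemma \ref{mordfib} and then running the same Ken Brown-style factorization inside the category of simplicial sheaves.

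First I would establish the local analogue of Lemma \ref{mordfib}: if $f: z \to z'$ is a local trivial fibration of simplicial sheaves, then $d_\ast(f)$ has the local right lifting property with respect to every Moerdijk cofibration $i: a \to b$ of bisimplicial sheaves; equivalently, $d(d_\ast(f))$ is a local trivial fibration of simplicial sheaves. By the $(d, d_\ast)$-adjunction, the required local lifting for $d_\ast(f)$ against $i$ reduces to the local lifting of $f$ against $d(i)$. Since $i$ is in particular a monomorphism of bisimplicial sheaves, $d(i)$ is a monomorphism of simplicial sheaves, hence a cofibration, and the local lifts exist because $f$ is a local trivial fibration.

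Next, using the locally fibrant hypothesis on both $z$ and $z'$, one can apply the Ken Brown factorization (built from the local path object of $z'$, or from the analogous pullback construction) to write $\beta = g \circ \sigma$, where $g: w \to z'$ is a pointed local trivial fibration and $\sigma: z \to w$ is a pointed section of a pointed local trivial fibration $h: w \to z$. Applying $d_\ast$ and passing to diagonals, the previous step gives that $d(d_\ast(g))$ and $d(d_\ast(h))$ are local trivial fibrations of simplicial sheaves, and $d(d_\ast(\sigma))$ is a section of $d(d_\ast(h))$, hence a local weak equivalence by the $2$-out-of-$3$ property. Therefore $d(d_\ast(\beta)) = d(d_\ast(g)) \circ d(d_\ast(\sigma))$ is a local weak equivalence, which is precisely the statement that $d_\ast(\beta)$ is a diagonal local weak equivalence. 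Pointedness passes through since the whole factorization can be carried out in the pointed slice.

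The main obstacle is the first step: one must verify that the classical Moerdijk adjunction argument of Lemma \ref{mordfib} transports to the \emph{local} right lifting property used for simplicial sheaves. This is a routine check once one observes that the diagonal of a monomorphism of bisimplicial sheaves is a monomorphism of simplicial sheaves, together with the standard sheaf-theoretic adjointness calculation for local right lifting. Alternatively, one could avoid this step entirely by fixing a Boolean localization $p: \Shv{\mc{B}} \to \Shv{\mc{C}}$, observing that $p^\ast$ commutes with $d_\ast$ (since $d_\ast$ is built from finite limits at each bidegree, which $p^\ast$ preserves) and sends locally fibrant simplicial sheaves to sectionwise fibrant ones, and then applying Corollary \ref{diagequiv} sectionwise in $\Shv{\mc{B}}$.
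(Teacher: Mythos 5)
Your proposal is correct in outline, but your \emph{primary} route differs from the paper's, while the alternative you sketch in your last sentence is essentially the paper's own proof. The paper fixes a Boolean localization $p: \Shv{\mc{B}} \to \Shv{\mc{C}}$, notes that $p^\ast(\beta)$ is a sectionwise weak equivalence of sectionwise fibrant objects, applies Corollary \ref{diagequiv} in sections, and then transports the conclusion back along the identification $p^\ast d_\ast \cong d_\ast p^\ast$: in bidegree $(m,n)$ one has $d_\ast(z)_{m,n} = z^{\Delta^m \times \nsimp}$ with constant finite exponent, and $p^\ast$ is exact, so the exponential (and hence $d_\ast$) commutes with $p^\ast$; exactness of $d$ and $p^\ast$ then shows $d(d_\ast(\beta))$ is a local weak equivalence. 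Your primary route --- a local analogue of Lemma \ref{mordfib} followed by the mapping-path-space factorization of $\beta$ into a local trivial fibration composed with a section of one --- is sound in shape, and that factorization is exactly the one the paper uses elsewhere (Lemma \ref{lemhyp}, Proposition \ref{invwe}). But the step you flag as ``routine'' is where the real content sits: the local right lifting property is defined against the constant cofibrations $\dnsimp \subset \nsimp$, and extending it to arbitrary non-representable monomorphisms $d(i)$, knowing that local trivial fibrations satisfy it, and transporting local lifts across the $(d, d_\ast)$-adjunction compatibly with refinement along covers, are all themselves most cleanly verified by passing to a Boolean localization --- at which point you again need $p^\ast$ to commute with $d_\ast$. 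So both routes funnel through the same key computation; the paper's version reaches it in one step and is the more economical write-up, whereas yours additionally establishes that $d(d_\ast(-))$ preserves local trivial fibrations, a fact the paper proves separately later (in showing $d_\ast(z) \to \ast$ is a diagonal local fibration) by reusing this same commutation.
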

\begin{proof}
Fix a Boolean localization $p: \Shv{\mc{B}} \to \Shv{\mc{C}}$. Then the map
$p^\ast(\beta)$ is a sectionwise weak equivalence of sectionwise fibrant simplicial
sheaves on $\mc{B}$ so that $d_\ast(p^\ast(\beta))$ is a sectionwise diagonal
weak equivalence of bisimplicial sheaves by Lemma \ref{diagequiv}. In bisimplicial
degree $(m, n)$ this map is given by the sheaf map
$(p^\ast \beta)^{\Delta^m \times \nsimp}$ where $\Delta^m \times \nsimp$ is the
constant sheaf associated to the corresponding simplicial set. As $p^\ast$ is exact
one has $p^\ast(\Delta^m \times \nsimp) = \Delta^m \times \nsimp$ so that the
map $(p^\ast \beta)^{\Delta^m \times \nsimp}$ is isomorphic to the map
$p^\ast(\beta^{\Delta^m \times \nsimp})$ naturally in $m$, $n \geq 0$. Thus the
map $p^\ast(d_\ast(\beta))$ is also a sectionwise diagonal weak equivalence.
By exactness of $d$ and $p^\ast$ it follows that $p^\ast(d(d_\ast(\beta)))$ is a
sectionwise weak equivalence so that $d(d_\ast(\beta))$ must be a local weak
equivalence, hence $d_\ast(\beta)$ is a diagonal local weak equivalence as was
to be shown.
\end{proof}

Here is a first application to cocycles:
\begin{lem} \label{lwed}
If $\beta: z \to z^\prime$ is a local weak equivalence of pointed locally fibrant
simplicial sheaves then the induced functor
\begin{displaymath}
\beta_\ast: BH_d(x, z) \to BH_d(x, z^\prime)
\end{displaymath}
is a homotopy equivalence.
\end{lem}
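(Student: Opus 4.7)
The plan is to reduce this to a cocycle-theoretic statement about a model structure on bisimplicial sheaves and then to invoke the standard cocycle machinery. First, I will use the adjunction $d \dashv d_\ast$ already exploited above to reformulate the cocycle categories: an object of $H_d(x,z)$, namely $x \xleftarrow{d(f)\,\simeq} d(u) \xrightarrow{p} z$, is the same data as a pointed diagram
\begin{displaymath}
K_v(x, 0) \xleftarrow{f} u \xrightarrow{\tilde{p}} d_\ast(z)
\end{displaymath}
of bisimplicial sheaves in which $f$ is a diagonal local weak equivalence, and morphisms correspond under adjunction as well. Under this identification, the functor $\beta_\ast$ becomes composition with the map $d_\ast(\beta): d_\ast(z) \to d_\ast(z^\prime)$.

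The next step is to observe that $d_\ast(\beta)$ is itself a pointed diagonal local weak equivalence by Lemma \ref{locdequiv}, since $z$ and $z^\prime$ are both locally fibrant. Thus the question reduces to showing that, in the category of pointed bisimplicial sheaves, the cocycle category $H(a, b)$ built out of cocycles $a \xleftarrow{\simeq_d} u \to b$ with diagonal local weak equivalences on the left is functorial (up to homotopy equivalence of classifying spaces) in the variable $b$ along diagonal local weak equivalences.

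The plan for that step is to apply Jardine's cocycle-category technology in the diagonal local model structure on pointed bisimplicial sheaves, in which the weak equivalences are precisely the diagonal local weak equivalences. This structure is right proper (right properness transfers from the local structure on simplicial sheaves via the fact that $d$ is exact and preserves and reflects the relevant fibrations and trivial fibrations, combined with the right properness of the local structure on simplicial sheaves), and it satisfies the standard hypotheses of Jardine's cocycle lemma. The cocycle lemma then produces directly the required homotopy equivalence $\beta_\ast: BH_d(x, z) \to BH_d(x, z^\prime)$, by constructing a homotopy inverse via any factorization of $d_\ast(\beta)$ through a trivial cofibration and a trivial fibration and using the universal property of cocycles.

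The main obstacle I anticipate is verifying (or citing carefully) that the diagonal local model structure on pointed bisimplicial sheaves is right proper and supports the cocycle calculus. Once that is in hand, the statement is an immediate instance of Jardine's cocycle theorem combined with Lemma \ref{locdequiv}; no further explicit construction of the homotopy inverse should be required, and in particular the argument avoids any direct manipulation of diagonals of the cocycle vertices.
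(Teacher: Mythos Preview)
Your opening moves coincide with the paper's: pass through the adjunction $d \dashv d_\ast$ to rewrite objects of $H_d(x,z)$ as bisimplicial ``cocycles'' $(f,\tilde p)$, and invoke Lemma \ref{locdequiv} so that $\beta_\ast$ becomes postcomposition with a diagonal local weak equivalence $d_\ast(\beta)$. After that the arguments diverge.

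You propose to finish by appealing to a diagonal local model structure on pointed bisimplicial sheaves and then to apply Jardine's general cocycle theorem. The gap is exactly the one you flag yourself: neither the existence of such a model structure nor its right properness is established here or cited, and the ``pointed'' category in play is pointed relative to a chosen geometric point of the site rather than under the terminal object, so even the formal hypotheses of the cocycle theorem from \cite{Jardine4} would need checking. As written, the proposal defers the entire content of the lemma to an unverified structural claim.

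The paper sidesteps this by building the homotopy inverse $\tilde\psi$ explicitly, in the style of Lemma~1 of \cite{Jardine4}. Given $(f,\tilde p): u \to x \times d_\ast(z^\prime)$, one factors \emph{sectionwise} in the Moerdijk model structure on bisimplicial \emph{sets} as a sectionwise diagonal weak equivalence $c$ followed by a sectionwise diagonal fibration $(p_x,\tilde p_{z^\prime})$, and then pulls back along $1 \times d_\ast(\beta)$. A Boolean localization argument (exactness of $p^\ast$ and of $d$) shows the pulled-back map is again a diagonal local weak equivalence; this is precisely the right-properness-type input you wanted, but proved only in the concrete instance required rather than as a global model-categorical fact. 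Canonical zigzags then exhibit $\tilde\psi$ and $\beta_\ast$ as homotopy inverses. So the paper's route trades your clean high-level citation for a hands-on construction that needs no model structure on bisimplicial sheaves at all.
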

\begin{proof}
The functor $\beta_\ast$ is defined by composition with $\beta$, sending the
righthand map from $p$ to $p^\prime \deq \beta p$. The local weak equivalence
$\beta$ determines a diagonal local weak equivalence $d_\ast(\beta)$ by Lemma
\ref{locdequiv}, and by adjunction the associated ``cocycle'' $(f, \tilde{p})$ is
sent to $\tilde{p}^\prime \deq d_\ast(\beta)\tilde{p}$.

Suppose one has a pointed ``cocycle'' of the form
\begin{displaymath}
x \xleftarrow{f} u \xrightarrow{\tilde{p}} d_\ast(z^\prime)
\end{displaymath}
where $f$ is a diagonal local weak equivalence. Emulating the proof of Lemma $1$
of \cite{Jardine4}, this is equivalent to giving a map
$(f, \tilde{p}): u \to x \times d_\ast(z^\prime)$ which may be factored using the
Moerdijk structure in sections as
\begin{displaymath}
\xymatrix{  &  w \ar[dr]^{(p_x, \tilde{p}_{z^\prime})}   & \\
           u \ar[ur]^c \ar[rr]^(0.4){(f, \tilde{p})}  &  & x \times d_\ast(z^\prime)}
\end{displaymath}
where $c$ is a sectionwise diagonal equivalence and $(p_x, \tilde{p}_{z^\prime})$
is a sectionwise diagonal fibration. Observe that $p_x$ is a diagonal local weak
equivalence since both $f$ and $c$ are. Pull back along the diagonal equivalence
$1 \times d_\ast(\beta)$ to get a commutative square
\begin{displaymath}
\xymatrix@=40pt{ w^\prime \ar[r]^{(1 \times d_\ast(\beta))_\ast}
                   \ar[d]_{(p_x^\ast, \tilde{p}_z^\ast)} &
                 w \ar[d]^{(p_x, \tilde{p}_{z^\prime})} \\
                 x \times d_\ast(z) \ar[r]_{1 \times d_\ast(\beta)}  &
                 x \times d_\ast(z^\prime) }
\end{displaymath}
defining $w^\prime$. The map $(p_x^\ast, \tilde{p}_z^\ast)$ is a sectionwise
diagonal fibration since $(p_x, \tilde{p}_{z^\prime})$ was, and passing to a
Boolean localization $p^\ast$ preserves pullbacks (and $d$ is exact) so the map
$(1 \times d_\ast(\beta))_\ast$ is a diagonal local weak equivalence and therefore
so is the map $p_x^\ast$. This determines a functor $\tilde{\psi}$ from the
category of pointed ``cocycles'' of the form
\begin{displaymath}
x \xleftarrow{f} u \xrightarrow{\tilde{p}} d_\ast(z^\prime)
\end{displaymath}
to the analogous category of such objects with target $d_\ast(z)$. The canonical
maps
\begin{displaymath}
(f, \tilde{p}) \to (p_x, \tilde{p}_{z^\prime}) \leftarrow \beta_\ast
  \tilde{\psi} (f, \tilde{p})
\end{displaymath}
and
\begin{displaymath}
(g, \tilde{q}) \to \tilde{\psi}\beta_\ast(g, \tilde{q})
\end{displaymath}
determine natural transformations (use that $w^\prime$ is a pullback for the
latter). The aforementioned categories are therefore homotopy equivalent. These
categories are isomorphic to $H_d(x, z)$ and $H_d(x, z^\prime)$, respectively, so
the result follows.
\end{proof}

\begin{cor}
If $\beta: y \to z$ is a globally fibrant model of a pointed locally fibrant
simplicial sheaf $y$ then the induced map
\begin{displaymath}
\pi_0(\beta_\ast): \pi_0 H_d(x, y) \to \pi_0 H_d(x, z)
\end{displaymath}
is a bijection.
\end{cor}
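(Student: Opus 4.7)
The plan is to deduce this corollary almost immediately from Lemma \ref{lwed}. First I would unpack what ``globally fibrant model'' means in this context: $\beta: y \to z$ is a local weak equivalence whose target $z$ is globally fibrant in the Jardine closed model structure on $\sShv{\mc{C}}$. Since any globally fibrant simplicial sheaf is locally fibrant (sectionwise Kan fibrancy implies local Kan fibrancy, cf.~the definitions in \cite{Jardine9}), both source and target of $\beta$ are pointed locally fibrant. In particular $\beta$ satisfies the hypotheses of Lemma \ref{lwed}.

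Applying Lemma \ref{lwed} directly then says that the induced functor $\beta_\ast: BH_d(x, y) \to BH_d(x, z)$ is a homotopy equivalence of simplicial sets. Taking $\pi_0$ of a simplicial homotopy equivalence produces a bijection of sets, and for any small category $\mc{D}$ one has $\pi_0 B\mc{D} \cong \pi_0 \mc{D}$ (path components of the nerve coincide with connected components of the category). Consequently $\pi_0(\beta_\ast): \pi_0 H_d(x, y) \to \pi_0 H_d(x, z)$ is a bijection. There is no real obstacle here; the corollary is a direct application of the lemma, and the only thing to verify is the elementary observation that the target of a globally fibrant model is locally fibrant so that the lemma is applicable.
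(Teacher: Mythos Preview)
Your proof is correct and follows exactly the paper's own approach: observe that globally fibrant implies locally fibrant, then apply Lemma \ref{lwed}. The paper's proof is just the two sentences ``Globally fibrant objects are locally fibrant. Apply Lemma \ref{lwed}.'' Your parenthetical justification (sectionwise Kan implies local Kan) is not quite the right reason in the injective model structure, but the implication ``globally fibrant $\Rightarrow$ locally fibrant'' is standard and the paper simply asserts it.
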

\begin{proof}
Globally fibrant objects are locally fibrant. Apply Lemma \ref{lwed}.
\end{proof}

In Lemma $4$ of \cite{Jardine13}, Jardine established that if
\begin{displaymath}
x \xleftarrow{f\, \simeq} u \to z
\end{displaymath}
is any cocycle of pointed simplicial (pre)sheaves with $z$ locally fibrant then it
may be functorially replaced by a cocycle of the form
\begin{displaymath}
x \xleftarrow{f^\prime\,\simeq} u^\prime \to z
\end{displaymath}
where $f^\prime$ is a hypercover, such that the new cocycle is in the same path
component of $H(x, z)$ as the original. It follows that the inclusion functor
\begin{displaymath}
j: H_{\hyp}(x, z) \hra H(x, z)
\end{displaymath}
of the full subcategory $H_{\hyp}(x, z)$ into $H(x, z)$ induces a bijection on path
components for any locally fibrant simplicial (pre)sheaf $z$.

To give an analogue for $H_d(x, z)$, let $H_{d-\hyp}(x, z)$ denote the category of
cocycles of the form
\begin{displaymath}
x \xleftarrow{d(f)\, \simeq} d(u) \to z
\end{displaymath}
where $f: u \to x$ is any map of pointed bisimplicial sheaves to a pointed
simplicial sheaf $x$ such that the induced map $d(f)$ is a hypercover, whose
morphisms are given by maps $m: u \to u^\prime$ inducing maps of such cocycles in
the usual way. Then $H_{d-\hyp}(x, z)$ is another full subcategory of $H_d(x, z)$
so that the inclusion functor
\begin{displaymath}
i^\prime: H_{d-\hyp}(x, z) \hra H_d(x, z)
\end{displaymath}
is injective on path components.

Say that a map $f: x \to y$ of pointed bisimplicial sheaves is a \emph{diagonal
local fibration} if the induced map $d(f)$ is a local fibration of simplicial
sheaves.

\begin{lem}
If $z$ is a pointed locally fibrant simplicial sheaf on $\mc{C}$ then $d_\ast(z)
\to \ast$ is a diagonal local fibration.
\end{lem}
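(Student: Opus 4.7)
The plan is to reduce to a statement about Kan complexes via Boolean localization, and then apply the $d \dashv d_\ast$ adjunction together with the definition of the Moerdijk closed model structure.

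First I would fix a Boolean localization $p: \Shv{\mc{B}} \to \Shv{\mc{C}}$. By the techniques of \cite{Jardine16} it suffices to show that the induced map $p^\ast(d(d_\ast(z))) \to \ast$ is a sectionwise Kan fibration of simplicial sheaves on $\mc{B}$. The exactness of $p^\ast$ makes it commute with $d$, and the argument used in the proof of Lemma \ref{locdequiv} (based on the formula $(d_\ast z)_{m,n} = z^{\Delta^m \times \nsimp}$, where the exponent is a constant sheaf) gives $p^\ast \circ d_\ast \cong d_\ast \circ p^\ast$. Evaluation at any object $U$ further commutes with both $d$ and $d_\ast$, so the problem reduces to the following sectionwise statement: for any Kan complex $K$, the simplicial set $d(d_\ast K)$ is a Kan complex.

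Next I would prove this by showing that $d_\ast K$ is in fact fibrant for the Moerdijk model structure, since then $d(d_\ast K) \to \ast$ is a Kan fibration by the very definition of a Moerdijk fibration. Given a trivial Moerdijk cofibration $i: A \to B$, a lifting problem for $i$ against $d_\ast K \to \ast$ is, by the $d \dashv d_\ast$ adjunction, equivalent to a lifting problem for $d(i): dA \to dB$ against $K \to \ast$. But $i$ is a monomorphism of bisimplicial sets (since every Moerdijk cofibration is one), hence $d(i)$ is a monomorphism of simplicial sets; combined with the fact that $d(i)$ is by definition a weak equivalence, this shows $d(i)$ is a trivial cofibration of simplicial sets, against which $K \to \ast$ has the right lifting property because $K$ is Kan.

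I do not foresee a serious obstacle: once the commutation of $p^\ast$ with $d$ and $d_\ast$ is recorded (which is already essentially present in Lemma \ref{locdequiv}), the remainder is pure adjunction combined with the defining property of the Moerdijk fibrations. The one subtle point to verify explicitly is that evaluation at $U \in \mc{B}$ commutes with $d_\ast$, which follows because $d_\ast$ is a right adjoint and so preserves the limits describing sections.
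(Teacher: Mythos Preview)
Your proposal is correct and follows essentially the same route as the paper: Boolean localization, the commutation $p^\ast d_\ast \cong d_\ast p^\ast$ from the proof of Lemma~\ref{locdequiv}, exactness of $p^\ast$ and $d$, and then the sectionwise statement that $d_\ast$ of a Kan complex is Moerdijk fibrant. The only cosmetic difference is that the paper invokes ($3.14$, IV, \cite{GJ}) for this last fact, whereas you reprove it directly via the $d \dashv d_\ast$ adjunction and the observation that $d$ takes trivial Moerdijk cofibrations to trivial cofibrations of simplicial sets.
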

\begin{proof}
Fix a Boolean localization $p: \Shv{\mc{B}} \to \Shv{\mc{C}}$. The map
$p^\ast(z) \to \ast$ is a sectionwise fibration since $z$ is locally fibrant, so
the induced map $d_\ast(p^\ast(z)) \to \ast$ is a sectionwise diagonal fibration
by ($3.14$, IV, \cite{GJ}). By the proof of Lemma \ref{locdequiv} this implies
that the map $p^\ast(d_\ast(z)) \to \ast$ is also a sectionwise diagonal fibration,
but by exactness this implies that $p^\ast(d(d_\ast(z))) \to \ast$ is a sectionwise
fibration so that $d(d_\ast(z)) \to \ast$ is a local fibration, thus
$d_\ast(z) \to \ast$ is a diagonal local fibration.
\end{proof}

\begin{lem}
If $z$ is locally fibrant then the induced map $\pi_0(i^\prime)$ is a bijection.
\end{lem}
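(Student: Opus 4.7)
The inclusion $i^\prime$ is fully faithful, so $\pi_0(i^\prime)$ is automatically injective; the content of the lemma is surjectivity. Given a cocycle $x \xleftarrow{d(f)} d(u) \xrightarrow{p} z$ representing an element of $\pi_0 H_d(x,z)$, I would identify it via the $d \dashv d_\ast$ adjunction with a single bisimplicial-sheaf map $(f,\tilde p)\colon u \to x \times d_\ast(z)$ and seek a factorization
\begin{displaymath}
u \xrightarrow{c} w \xrightarrow{\phi} x \times d_\ast(z),
\end{displaymath}
in which $c$ is a monomorphism whose diagonal $d(c)$ is a local trivial cofibration of simplicial sheaves, and $\phi$ is a diagonal local fibration. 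Such a factorization should follow from a small-object argument against a suitable generating set of bisimplicial trivial cofibrations, whose properties are verified on a Boolean localization $p\colon \Shv{\mc{B}} \to \Shv{\mc{C}}$ by passing to the sectionwise Moerdijk structure on bisimplicial sets and using exactness of $p^\ast$ and of $d$, exactly as in the proofs of Lemma \ref{locdequiv} and the preceding lemma.

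Granting the factorization, the previous lemma gives that $d_\ast(z) \to \ast$ is a diagonal local fibration; by pullback stability of local fibrations together with exactness of $d$, the projection $\pi_x\colon x \times d_\ast(z) \to x$ is also a diagonal local fibration. Hence $f^\prime \deq \pi_x \circ \phi$ is a composite of diagonal local fibrations, so $d(f^\prime)$ is a local fibration of simplicial sheaves. From the identity $d(f) = d(f^\prime) \circ d(c)$ together with $d(f)$ and $d(c)$ being local weak equivalences, $2$-out-of-$3$ forces $d(f^\prime)$ to be a local weak equivalence as well, whence $d(f^\prime)$ is a hypercover of $x$.

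Setting $p^\prime\colon d(w) \to z$ to be the adjoint of $\pi_{d_\ast(z)} \circ \phi$ produces a cocycle $x \xleftarrow{d(f^\prime)} d(w) \xrightarrow{p^\prime} z$ in $H_{d-\hyp}(x,z)$, and the bisimplicial map $c\colon u \to w$ furnishes a morphism in $H_d(x,z)$ from the original cocycle to this new one, since $\phi \circ c = (f,\tilde p)$ so that the two triangles commute. The two cocycles therefore lie in the same path component of $H_d(x,z)$, which establishes surjectivity of $\pi_0(i^\prime)$.

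The main technical hurdle is the factorization in the first paragraph: one must manufacture $c$ and $\phi$ in bisimplicial sheaves while keeping control of what the diagonal does, and this is precisely the step where the exactness of $d$ and the Boolean-localization arguments developed earlier in the section do the real work.
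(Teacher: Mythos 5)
Your proof follows the paper's own argument almost exactly: identify the cocycle with its adjoint $(f,\tilde p)\colon u \to x \times d_\ast(z)$, factor, use that $d_\ast(z)\to\ast$ is a diagonal local fibration (the preceding lemma) to see that the composite down to $x$ is a diagonal local fibration, and conclude by $2$-out-of-$3$ that it is a hypercover, with the first factor of the factorization supplying the connecting morphism in $H_d(x,z)$. The only place you diverge --- and the one point you flag as ``the main technical hurdle,'' namely producing the factorization via a small-object argument against a generating set of bisimplicial trivial cofibrations for some localized structure --- is handled far more cheaply in the paper: one simply applies the Moerdijk factorization \emph{sectionwise}, writing $(f,\tilde p)$ as a sectionwise trivial Moerdijk cofibration $c$ followed by a sectionwise Moerdijk fibration $(p_x,\tilde p_z)$. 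Since a sectionwise diagonal weak equivalence is in particular a diagonal local weak equivalence, and a sectionwise diagonal fibration is a diagonal local fibration, this factorization already has every property your argument needs; no new model structure, generating set, or Boolean-localization verification is required at this step. So your proof is correct in outline, but you should replace the speculative small-object step with the sectionwise Moerdijk factorization to close the gap.
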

\begin{proof}
As above, identify any object
\begin{displaymath}
x \xleftarrow{d(f)} u \xrightarrow{p} z
\end{displaymath}
of $H_d(x, z)$ with the corresponding map
\begin{displaymath}
(f, \tilde{p}): u \to x \times d_\ast(z)
\end{displaymath}
and factor $(f, \tilde{p})$ as a sectionwise trivial Moerdijk cofibration
$c: u \to w$ followed by a sectionwise Moerdijk fibration
$(p_x, \tilde{p}_z): w \to x \times d_\ast(z)$. Then $c$ and $f$ are diagonal local
weak equivalences so $p_x$ is a diagonal local weak equivalence. The map $p_x$ is
the composite
\begin{displaymath}
w \xrightarrow{(p_x, p_z)} x \times d_\ast(z) \xrightarrow{\pr_x} x
\end{displaymath}
where the first map is a sectionwise diagonal fibration and the second map is a
diagonal local fibration since $d_\ast(z) \to \ast$ is a diagonal local fibration.
Thus $d(p_x)$ is a hypercover and the result follows by adjunction.
\end{proof}

The category $H_{d-\hyp}(x, z)$ is a (\emph{not} a priori full) subcategory of
$H_{\hyp}(x, z)$, the full subcategory of $H(x, z)$ whose objects are cocycles of
the form
\begin{displaymath}
x \xleftarrow{f\, \simeq} v \to z
\end{displaymath}
where $f$ is any hypercover. Let
\begin{displaymath}
i^{\prime\prime}: H_{d-\hyp}(x, z) \hra H_{\hyp}(x, z)
\end{displaymath}
denote the inclusion.

\begin{lem}
For any pointed locally fibrant simplicial sheaf $z$ the induced map
$\pi_0(i^{\prime\prime})$ is surjective.
\end{lem}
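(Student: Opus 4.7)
The plan is to exhibit a strict set-theoretic section of $\pi_0(i^{\prime\prime})$ by extending simplicial hypercovers ``vertically constantly'' to bisimplicial data. Concretely, given a representative cocycle $x \xleftarrow{f} v \xrightarrow{p} z$ of a path component in $H_{\hyp}(x, z)$, where $f$ is a pointed hypercover of simplicial sheaves, I would set $u \deq K_v(v, 0)$, i.e.\ the bisimplicial sheaf obtained by regarding $v$ as simplicially discrete in the vertical direction, so $u_{m, n} = v_m$ for all $n \geq 0$. Similarly, take $g \deq K_v(f, 0): K_v(v, 0) \to K_v(x, 0)$, the bisimplicial map which is $f$ horizontally and constant vertically.

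Next I would verify the tautological identifications $d(u) = v$ and $d(g) = f$ directly from the definition of the diagonal, and observe that the pointing $z \in x^\ast(v_0)$ of $v$ promotes to a pointing of $u$ at $u_{0, 0} = v_0$, with respect to which $g$ is automatically pointed since $f$ is. Since $d(g) = f$ is a pointed hypercover by hypothesis, the cocycle
\begin{displaymath}
x \xleftarrow{d(g)} d(u) \xrightarrow{p} z
\end{displaymath}
is an object of $H_{d-\hyp}(x, z)$, and applying $i^{\prime\prime}$ to this object returns the original cocycle $x \xleftarrow{f} v \xrightarrow{p} z$ on the nose. In particular, each path component of $H_{\hyp}(x, z)$ lies in the image of $\pi_0(i^{\prime\prime})$.

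There is no substantive obstacle here: the construction $v \rsquig K_v(v, 0)$ is visibly functorial and section-like, and the local fibrancy hypothesis on $z$ is not even needed for surjectivity (it would only be relevant if one wished to promote the argument to a full $\pi_0$-bijection, where the harder direction—injectivity—would presumably follow along the lines of the previous two lemmas by interpolating diagonal local fibrations between bisimplicial representatives).
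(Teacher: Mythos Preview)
Your argument is correct and considerably simpler than the paper's. You observe that the vertically constant functor $K_v(-,0)$ is a section of the diagonal, so any hypercover cocycle $x \xleftarrow{f} v \xrightarrow{p} z$ is already of the form $x \xleftarrow{d(g)} d(u) \xrightarrow{p} z$ with $u = K_v(v,0)$ and $g = K_v(f,0)$; this lands in $H_{d-\hyp}(x,z)$ on the nose and hits the original object under $i^{\prime\prime}$, giving surjectivity on $\pi_0$ immediately and without any hypothesis on $z$.

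The paper instead factors $(f,p): v \to x \times z$ in the Moerdijk model structure (applied sectionwise, with $v$, $x$, $z$ regarded as vertically constant bisimplicial sheaves) as a trivial Moerdijk cofibration followed by a Moerdijk fibration $(p_x,p_z): w \to x \times z$, and then argues that $d(p_x)$ is a hypercover by composing a sectionwise diagonal fibration with the diagonal local fibration $\pr_x: x \times z \to x$; the latter step is where local fibrancy of $z$ enters. This produces a cocycle in $H_{d-\hyp}(x,z)$ in the \emph{same path component} as the original rather than equal to it. The advantage of the paper's route is uniformity with the surrounding arguments (the same Moerdijk factorization machinery is used for the companion lemmas on $i^\prime$ and later for $i$), but for this particular surjectivity statement your section argument is both shorter and strictly stronger, since it drops the local fibrancy hypothesis on $z$.
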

\begin{proof}
Let
\begin{displaymath}
x \xleftarrow{f\, \simeq} v \xrightarrow{p} z
\end{displaymath}
be any object of $H_{\hyp}(x, z)$ and identify it with the map 
$(f, p): v \to x \times z$. Factor $(f, p)$ as a sectionwise trivial Moerdijk
cofibration $c: v \to w$ followed by a sectionwise Moerdijk fibration
$(p_x, p_z): w \to x \times z$. The maps $f$ and $c$ are diagonal local weak
equivalences so $p_x$ is a diagonal local weak equivalence. The map $p_x$ is the
composite
\begin{displaymath}
w \xrightarrow{(p_x, p_z)} x \times z \xrightarrow{\pr_x} x
\end{displaymath}
where the first map is a sectionwise diagonal fibration and the second map is a
diagonal local fibration since $z$ is locally fibrant. Thus $d(p_x)$ is a
hypercover, so the result follows.
\end{proof}

Recall that the diagonal functor $d$ also has a left adjoint $d^\ast$ ($3.3$, IV,
\cite{GJ}). To show injectivity of $\pi_0(i^{\prime\prime})$ one may use a
roundabout argument beginning with
\begin{lem} \label{unitmap}
For any simplicial set $X$, the unit map $\eta: X \to dd^\ast(X)$ is a weak
equivalence.
\end{lem}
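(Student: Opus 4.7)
The plan is to reduce to the case of standard simplices via a skeletal induction, using that $dd^\ast$ is a left adjoint. I would begin with representables: since $d(\Delta^{n,n}) = \Delta^n \times \Delta^n$ where $\Delta^{n,n}$ denotes the representable bisimplicial set at $(n,n)$, the adjunction $d^\ast \dashv d$ gives $d^\ast(\Delta^n) = \Delta^{n,n}$, and tracing the unit through this identification shows that $\eta_{\Delta^n} \colon \Delta^n \to \Delta^n \times \Delta^n$ is the diagonal. Both source and target are contractible, so $\eta_{\Delta^n}$ is a weak equivalence for every $n \geq 0$.

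Next I would record two functorial properties of $dd^\ast$. First, since both $d$ and $d^\ast$ are left adjoints (with right adjoints $d_\ast$ and $d$ respectively), the composite $dd^\ast$ preserves colimits. Second, $dd^\ast$ preserves monomorphisms: the diagonal $d$ obviously does, and $d^\ast$ does as well because $d$ is right Quillen for the Moerdijk model structure (its trivial fibrations and fibrations are by definition the diagonal trivial fibrations and fibrations), so $d^\ast$ is left Quillen and sends monomorphisms of simplicial sets to Moerdijk cofibrations, which are themselves monomorphisms as noted in the excerpt.

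With these two ingredients in hand I would conclude by skeletal induction. Writing $X = \colim_n \sk_n X$ and presenting $\sk_n X$ as a pushout of $\sk_{n-1} X$ along a coproduct of boundary inclusions $\partial \Delta^n \hookrightarrow \Delta^n$, the application of $dd^\ast$ yields another pushout along a cofibration, which is therefore a homotopy pushout in $\sSet$. The natural transformation $\eta$ compares the two pushout squares; the inductive hypothesis together with the base case and the gluing lemma for homotopy pushouts then force $\eta_{\sk_n X}$ to be a weak equivalence, and passing to the filtered colimit over $n$ finishes the argument since weak equivalences in $\sSet$ are preserved by filtered colimits along monomorphisms. The main obstacle is precisely the preservation of cofibrations by $dd^\ast$, which is what makes the inductive step honest; invoking the Quillen property of $(d^\ast, d)$ for the Moerdijk structure dispatches this cleanly and avoids a direct cellular analysis of the coend description of $d^\ast$.
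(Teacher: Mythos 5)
Your argument is correct, and its core is the same as the paper's: compute $dd^\ast(\Delta^n) = d(\Delta^{n,n}) = \Delta^n \times \Delta^n \simeq \ast$ to settle the representable case, and then propagate this along a homotopy-invariant colimit decomposition of a general $X$, the key technical input being that $d^\ast$ is left Quillen for the Moerdijk structure (so $dd^\ast$ preserves cofibrations and colimits). Where you differ is in the decomposition and the gluing step. The paper writes $X \simeq \hocolim_{\slice{\Delta}{X}} \Delta^n$ and shows that $dd^\ast$ preserves homotopy colimits of arbitrary diagrams, which it gets by checking that the right adjoint $dd_\ast$ preserves pointwise trivial fibrations and hence that $dd^\ast$ preserves projective cofibrant models; the weak equivalence $\eta_X$ then falls out of one commutative square comparing the two homotopy colimits. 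You instead run a skeletal induction, presenting $\sk_n X$ as a pushout of $\sk_{n-1}X$ along coproducts of $\partial\Delta^n \hookrightarrow \Delta^n$, and invoke the gluing lemma for homotopy pushouts plus preservation of weak equivalences under sequential colimits along monomorphisms. (Note that your induction must be on dimension so that the hypothesis applies to $\partial\Delta^n = \sk_{n-1}\Delta^n$ as well as to $\sk_{n-1}X$; with that reading the step is sound.) Your route is more elementary in that it avoids the projective model structure on diagrams entirely, at the cost of a slightly longer bookkeeping argument; the paper's route packages all of the cell-by-cell work into the single statement that $dd^\ast$ preserves homotopy colimits, which it reuses implicitly in spirit elsewhere. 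Both are complete proofs.
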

\begin{proof}
Any Moerdijk cofibration $c$ is a monomorphism in particular ($3.15$, IV,
\cite{GJ}), so $d(c)$ is a monomorphism and hence a cofibration of simplicial sets.
If $e$ is any Moerdijk weak equivalence then $d(e)$ is a weak equivalence of
simplicial sets by definition, thus $d$ preserves cofibrations, weak equivalences,
and colimits (it is left adjoint to a functor $d_\ast$: cf. \cite{GJ}). Its right
adjoint $d_\ast$ preserves trivial fibrations, hence pointwise trivial fibrations
of diagrams of simplicial sets.

The left adjoint $d^\ast$ of $d$ preserves colimits by definition, and if
$c: A \hra B$ is any trivial cofibration of simplicial sets then $d^\ast(c)$ is a
trivial Moerdijk cofibration by adjointness and CM$4$ for the Moerdijk closed
model structure. Any weak equivalence $e: X \to Y$ of simplicial sets factors as
\begin{displaymath}
\xymatrix{ & Z \ar[dr]^p & \\
           X \ar[rr]^e \ar[ur]^c &  &  Y \ar@/_3ex/[ul]_{c^\prime} }
\end{displaymath}
where $c$ is a trivial cofibration and $p$ is left inverse to a trivial cofibration
$c^\prime$, thus $d^\ast$ sends weak equivalences of simplicial sets to Moerdijk
weak equivalences (also, $d^\ast$ sends cofibrations to Moerdijk cofibrations by
another adjointness plus CM$4$ argument). The right adjoint $d$ of $d^\ast$
preserves trivial fibrations by definition of the Moerdijk structure, so preserves
pointwise trivial fibrations of diagrams of bisimplicial sets.

The composite $dd_\ast$ therefore preserves pointwise trivial fibrations of diagrams
of simplicial sets, so its left adjoint $dd^\ast$ preserves projective cofibrations.
As $d$ and $d^\ast$ both preserve weak equivalences and colimits, $dd^\ast$
preserves projective cofibrant models of diagrams of simplicial sets and therefore
homotopy colimits.

Observe that
\begin{displaymath}
dd^\ast(\nsimp) = d\Delta^{n,n} = \nsimp \times \nsimp \simeq \ast
\end{displaymath}
for $n \geq 0$ so that the canonical maps $\eta: \nsimp \to dd^\ast(\nsimp)$ are
weak equivalences for $n \geq 0$. For any simplicial set $X$ there is a canonical
weak equivalence
\begin{displaymath}
\hocolim{\slice{\Delta}{X}}{\nsimp} \simeq X
\end{displaymath}
(cf. $5.2$, IV, \cite{GJ}). Consider the commutative square
\begin{displaymath}
\xymatrix{ \hocolim{\slice{\Delta}{X}}{\nsimp} \ar[r]^(0.6){\simeq}
              \ar[d]_\eta &  X \ar[d]^\eta \\
           dd^\ast\hocolim{\slice{\Delta}{X}}{\nsimp} \ar[r]^(0.6)\simeq &
              dd^\ast(X) }
\end{displaymath}
The top and bottom maps are weak equivalences and the lefthand map is a weak
equivalence as it is weakly equivalent to the map
$\hocolim{\slice{\Delta}{X}}{\eta_n}$ where the
\begin{displaymath}
\eta_n: \nsimp \to dd^\ast(\nsimp)
\end{displaymath}
are the unit maps for $(d^\ast, d)$ applied to $\nsimp$ for $n \geq 0$.
\end{proof}

\begin{cor}
The adjunction $(d^\ast, d)$ is a Quillen equivalence between the standard closed
model structure on simplicial sets and the Moerdijk closed model structure on
bisimplicial sets.
\end{cor}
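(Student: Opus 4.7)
The plan is to leverage Lemma \ref{unitmap} (the unit $\eta_X: X \to dd^\ast X$ is always a weak equivalence) together with the fact that Moerdijk weak equivalences are \emph{by definition} diagonal weak equivalences, and then to invoke the standard criterion for a Quillen equivalence in its strongest form: for every cofibrant object $X$ and every map $f: d^\ast X \to Y$ with adjoint $f^\vee: X \to dY$, $f$ is a weak equivalence if and only if $f^\vee$ is.

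First I would note that $(d^\ast, d)$ is already a Quillen adjunction. This is essentially verified in the proof of Lemma \ref{unitmap}: the right adjoint $d$ preserves fibrations and trivial fibrations (by definition of the Moerdijk structure, where fibrations and weak equivalences are the diagonal fibrations and diagonal weak equivalences), and dually $d^\ast$ sends cofibrations to Moerdijk cofibrations and trivial cofibrations to trivial Moerdijk cofibrations by the adjointness arguments recorded there.

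Next, to establish the Quillen equivalence, note that every simplicial set $X$ is cofibrant, so it suffices to verify that for arbitrary $X \in \sSet$ and an arbitrary map $f: d^\ast X \to Y$ of bisimplicial sets (no fibrancy required on $Y$), the adjoint $f^\vee: X \to dY$ is a weak equivalence in $\sSet$ iff $f$ is a Moerdijk weak equivalence. By naturality, $f^\vee$ factors as
\begin{displaymath}
X \xrightarrow{\eta_X} dd^\ast X \xrightarrow{d(f)} dY.
\end{displaymath}
Lemma \ref{unitmap} identifies $\eta_X$ as a weak equivalence, so by two-out-of-three $f^\vee$ is a weak equivalence iff $d(f)$ is a weak equivalence of simplicial sets. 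But the latter is exactly the definition of $f$ being a Moerdijk weak equivalence in bisimplicial sets. This gives both directions and hence the Quillen equivalence.

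There is no real obstacle here since Lemma \ref{unitmap} has already done the work; the present corollary is essentially a formal unwinding that combines the definition of Moerdijk weak equivalence with the fact that the unit is a weak equivalence on all of $\sSet$.
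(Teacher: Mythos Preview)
Your proof is correct and follows essentially the same approach as the paper: both establish the Quillen adjunction by citing the proof of Lemma \ref{unitmap}, then verify the Quillen equivalence criterion via the factorization $f^\vee = d(f) \circ \eta_X$ together with the fact that $\eta_X$ is a weak equivalence. The only cosmetic difference is that you invoke two-out-of-three to handle both directions at once, whereas the paper treats them separately.
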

\begin{proof}
The functor $d^\ast$ preserves cofibrations and weak equivalences by the proof of
the previous Lemma, and $d$ preserves fibrations and weak equivalences by definition
of the Moerdijk closed model structure, hence $(d^\ast, d)$ is a Quillen adjunction.
If $f: X \to dY$ is a weak equivalence of simplicial sets, the adjoint map
$\tilde{f}: d^\ast X \to Y$ defined by the adjunction diagram
\begin{displaymath}
\xymatrix{ X \ar[r]^(0.4)\eta \ar[dr]_f  &  dd^\ast X \ar[d]^{d(\tilde{f})} \\
           &  dY }
\end{displaymath}
is a diagonal weak equivalence since $\eta$ is a weak equivalence. Conversely,
suppose $\tilde{f}: d^\ast X \to Y$ is a diagonal weak equivalence. Then
$d(\tilde{f})$ is a weak equivalence so the composite $f = d(\tilde{f})\eta$ is a
weak equivalence, as was to be shown.
\end{proof}

Let $i^{\prime\prime\prime}$ denote the inclusion functor
\begin{displaymath}
i^{\prime\prime\prime}: H_d(x, z) \hra H(x, z).
\end{displaymath}
Again, $H_d(x, z)$ is \emph{not} a priori a full subcategory of $H(x, z)$. 

\begin{lem}
For any two pointed simplicial sheaves $x$ and $z$, the induced map
$\pi_0(i^{\prime\prime\prime})$ is injective.
\end{lem}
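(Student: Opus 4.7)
The plan is first to reduce, by concatenating single-step zigzags, to the case of a single morphism $\alpha: i^{\prime\prime\prime}(C_0) \to i^{\prime\prime\prime}(C_1)$ in $H(x, z)$ between two cocycles $C_i = (u_i, f_i, \tilde{p}_i) \in H_d(x, z)$; thus $\alpha: d(u_0) \to d(u_1)$ is a simplicial sheaf map satisfying $d(f_1) \alpha = d(f_0)$ and $p_1 \alpha = p_0$, where $p_i$ denotes the $(d, d_\ast)$-adjoint of $\tilde{p}_i$. The goal is then to manufacture a zigzag in $H_d(x, z)$ connecting $C_0$ and $C_1$.

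The main device will be the adjunction $(d^\ast, d)$ from the preceding Corollary, used via the bisimplicial sheaf $w := d^\ast d(u_0)$ and two distinguished bisimplicial maps out of it: the counit $\epsilon := \epsilon_{u_0}: w \to u_0$, and the $(d^\ast, d)$-adjoint $\tilde{\alpha}: w \to u_1$ of $\alpha$, uniquely characterized by $d(\tilde{\alpha}) \circ \eta_{d(u_0)} = \alpha$. The triangle identity gives $d(\epsilon) \circ \eta_{d(u_0)} = \mathrm{id}$, so $d(\epsilon)$ is a retraction of the local weak equivalence $\eta_{d(u_0)}$ (Lemma \ref{unitmap}) and thus itself a local weak equivalence. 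Moreover, uniqueness of $(d^\ast, d)$-adjoints for bisimplicial maps $w \to K_v(x, 0)$ forces $f_0 \circ \epsilon = f_1 \circ \tilde{\alpha}$, since both have the same simplicial adjunct $d(f_0): d(u_0) \to x$ (the former via the triangle identity, the latter via the cocycle condition $d(f_1)\alpha = d(f_0)$).

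Next I would form the intermediate cocycle $C^\ast := (w, f_0 \epsilon, \tilde{p}_0 \epsilon) \in H_d(x, z)$, whose $K_v(x, 0)$-leg has diagonal $d(f_0) d(\epsilon)$ which is a local weak equivalence; the counit $\epsilon$ is then a bisimplicial cocycle morphism $C^\ast \to C_0$. To complete the zigzag by a candidate morphism $C^\ast \to C_1$ via $\tilde{\alpha}$, the $K_v(x, 0)$-leg compatibility $f_1 \tilde{\alpha} = f_0 \epsilon$ is built in. The $d_\ast(z)$-leg compatibility $\tilde{p}_1 \tilde{\alpha} = \tilde{p}_0 \epsilon$, however, translates under the $(d, d_\ast)$-adjoint bijection to the equality $p_1 \circ d(\tilde{\alpha}) = p_0 \circ d(\epsilon)$ of simplicial sheaf maps $d(w) \to z$; these agree after precomposition with $\eta_{d(u_0)}$ by the cocycle condition, but not generally on the nose, since $\eta_{d(u_0)}$ is a weak equivalence rather than a surjection.

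This $d_\ast(z)$-leg discrepancy will be the main obstacle. I plan to overcome it by introducing a second cocycle $C^{\ast\ast} := (w, f_0 \epsilon, \tilde{p}_1 \tilde{\alpha}) \in H_d(x, z)$, for which $\tilde{\alpha}: w \to u_1$ is manifestly a bisimplicial cocycle morphism $C^{\ast\ast} \to C_1$, and then connecting $C^\ast$ to $C^{\ast\ast}$ in $H_d(x, z)$ by a cylinder-type argument over their common $K_v(x, 0)$-leg. Since the two candidate $d_\ast(z)$-legs share the common restriction $p_0$ along the split monomorphism $\eta_{d(u_0)}$ (with retraction $d(\epsilon)$), one can produce such a bisimplicial cylinder using the Moerdijk factorization machinery, yielding intermediate cocycles and zigzag steps in $H_d(x, z)$. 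Splicing everything together produces a zigzag $C_0 \leftarrow C^\ast \rsquig C^{\ast\ast} \to C_1$ in $H_d(x, z)$, establishing the desired injectivity of $\pi_0(i^{\prime\prime\prime})$.
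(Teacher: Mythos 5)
Your proposal takes a genuinely different route from the paper's, and it has two gaps. The first is the opening reduction: if $i^{\prime\prime\prime}(C_0)$ and $i^{\prime\prime\prime}(C_1)$ lie in the same path component of $H(x,z)$, the connecting zigzag passes through intermediate cocycles $x \leftarrow v \to z$ whose middle objects $v$ are arbitrary simplicial sheaves, not diagonals of bisimplicial sheaves over $K_v(x,0)$; so you cannot reduce to elementary morphisms $i^{\prime\prime\prime}(C_0) \to i^{\prime\prime\prime}(C_1)$ between objects of $H_d(x,z)$, and your construction breaks down immediately for a step $\alpha: d(u_0) \to v$ since $\tilde\alpha$ then has no bisimplicial target. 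The paper handles this by applying $dd^\ast$ to \emph{every} object and morphism of the zigzag: each cocycle $(g,q): v \to x \times z = d(K_v(x\times z,0))$ factors uniquely as $d(\tilde g,\tilde q)\circ\eta_v$, yielding an object $(d^\ast v, \tilde g, d(\tilde q))$ of $H_d(x,z)$, and naturality of the adjunction makes every $d^\ast(m)$ a morphism in $H_d(x,z)$ — the $z$-leg compatibility is automatic there precisely because the replaced $z$-legs are themselves diagonals $d(\tilde q)$.

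The second gap is the step you defer: connecting $C^\ast = (w, f_0\epsilon, \tilde p_0\epsilon)$ to $C^{\ast\ast} = (w, f_0\epsilon, \tilde p_1\tilde\alpha)$ inside $H_d(x,z)$ is, at that point, the entire content of the lemma, and ``a cylinder-type argument using the Moerdijk factorization machinery'' is a hope rather than an argument. The two $z$-legs $p_0\, d(\epsilon)$ and $p_1\, d(\tilde\alpha)$ agree only after precomposition with the local weak equivalence $\eta_{d(u_0)}$; since the lemma assumes no fibrancy on $z$, one cannot promote that agreement to a simplicial homotopy on $d(w)$, much less to a zigzag realized by bisimplicial maps over $K_v(x,0)$ as the morphisms of $H_d(x,z)$ require. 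To your credit, the diagnostics are exactly right: the identity $f_0\epsilon = f_1\tilde\alpha$ via uniqueness of adjoints is a nice observation, and your recognition that the counit fails on the $z$-leg pinpoints the real subtlety (one the paper's own concluding sentence also treats rather lightly when matching the endpoints $C_j$ with their $dd^\ast$-replacements). But as written, the proposal stops exactly where the essential work begins, so it does not constitute a proof.
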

\begin{proof}
Suppose $m: (f, p) \to (g, q)$ is any morphism of $H(x, z)$. Then
$x \times z = d(x \times z)$ so that the map $(f, p): u \to x \times z$ uniquely
factors as
\begin{displaymath}
\xymatrix{  &  dd^\ast(u) \ar[d]^{d(\tilde{f}, \tilde{p})} \\
           u \ar[ur]^\eta \ar[r]_(0.4){(f, p)} &  x \times z }
\end{displaymath}
for some maps $\tilde{f}$, $\tilde{p}$ from $d^\ast(u)$ to $x$, $z$ by adjointness,
and similarly $(g, q): v \to x \times z$ factors as $\eta$ followed by a uniquely
determined pair $(\tilde{g}, \tilde{q})$ from $d^\ast(v)$ to $x \times z$. There is
then a commutative diagram
\begin{displaymath}
\xymatrix{ u \ar[r]^(0.4)\eta \ar[dd]_m  &  dd^\ast(u)
           \ar[dr]^{d(\tilde{f}, \tilde{p})} \ar[dd]^{dd^\ast(m)} &  \\
           &  &  x \times z \\
           v \ar[r]^(0.4)\eta  &  dd^\ast(v) \ar[ur]_{d(\tilde{g}, \tilde{q})} & }
\end{displaymath}
where both maps $\eta$ and the map $m$ are local weak equivalences (for $\eta$ use
Lemma \ref{unitmap} in sections), so $dd^\ast(m)$ is a local weak equivalence.
Further, $d(\tilde{f})$ is a local weak equivalence since $f$ and $\eta$ are local
weak equivalences, so $\tilde{f}$ is a diagonal local weak equivalence, and
similarly for $\tilde{g}$. The zigzag
\begin{displaymath}
(f, p) \xrightarrow{\eta} (d(\tilde{f}), d(\tilde{p})) \xrightarrow{dd^\ast(m)}
(d(\tilde{g}), d(\tilde{q})) \xleftarrow{\eta} (g, q)
\end{displaymath}
in $H(x, z)$ shows that the original map $m$ is in the same path component as
$dd^\ast(m)$, thus any morphism $m$ of objects in $H(x, z)$ naturally lifts to a
morphism $dd^\ast(m)$ in $H_d(x, z)$. It follows that any zigzag of maps in
$H(x, z)$ naturally lifts to a zigzag of maps in $H_d(x, z)$, so the result
follows.
\end{proof}

\begin{cor} \label{iprime}
Suppose $x$ and $z$ are two pointed simplicial sheaves as above such that $z$ is
locally fibrant. Then the induced maps
\begin{eqnarray*}
\pi_0(i^{\prime\prime\prime}): \pi_0 H_d(x, z)  \to  \pi_0 H(x, z)\qquad\mathrm{and}
\end{eqnarray*}
\begin{eqnarray*}
\pi_0(i^{\prime\prime}): \pi_0 H_{d-\hyp}(x, z) \to  \pi_0 H_{\hyp}(x, z)
\end{eqnarray*}
are bijections.
\end{cor}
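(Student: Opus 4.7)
The plan is to deduce the Corollary from the four preceding results by a purely formal diagram chase. First, I would record that the inclusions fit into a commutative square
\[
\xymatrix{ H_{d-\hyp}(x, z) \ar[r]^{i^{\prime\prime}} \ar[d]_{i^\prime} & H_{\hyp}(x, z) \ar[d]^{j} \\
           H_d(x, z) \ar[r]_{i^{\prime\prime\prime}} & H(x, z) }
\]
where $j$ is the inclusion from Lemma $4$ of \cite{Jardine13}. Commutativity is immediate, since every arrow is the identity on underlying cocycle data: both composites simply reinterpret a cocycle of the form $x \xleftarrow{d(f)} d(u) \to z$, with $d(f)$ a hypercover, as an ordinary cocycle in $H(x,z)$.

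Next I would apply $\pi_0$ to this square and assemble the inputs. The three preceding Lemmas give that $\pi_0(i^\prime)$ is a bijection, $\pi_0(i^{\prime\prime})$ is surjective, and $\pi_0(i^{\prime\prime\prime})$ is injective, while Jardine's result gives that $\pi_0(j)$ is a bijection. The task is therefore to promote the surjection and the injection to bijections.

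For surjectivity of $\pi_0(i^{\prime\prime\prime})$, I would chase a class $[\alpha] \in \pi_0 H(x,z)$ back through $\pi_0(j)^{-1}$ to a class $[\beta] \in \pi_0 H_{\hyp}(x,z)$, then through the surjection $\pi_0(i^{\prime\prime})$ to a class $[\gamma] \in \pi_0 H_{d-\hyp}(x,z)$; commutativity of the $\pi_0$-square then identifies $\pi_0(i^\prime)([\gamma])$ as a preimage of $[\alpha]$ under $\pi_0(i^{\prime\prime\prime})$. For injectivity of $\pi_0(i^{\prime\prime})$, suppose $\pi_0(i^{\prime\prime})([\gamma_1]) = \pi_0(i^{\prime\prime})([\gamma_2])$; applying the bijection $\pi_0(j)$ and invoking commutativity rewrites this as the equation $\pi_0(i^{\prime\prime\prime})\pi_0(i^\prime)([\gamma_1]) = \pi_0(i^{\prime\prime\prime})\pi_0(i^\prime)([\gamma_2])$, and injectivity of $\pi_0(i^{\prime\prime\prime})$ together with bijectivity of $\pi_0(i^\prime)$ forces $[\gamma_1] = [\gamma_2]$.

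There is no real obstacle here: the Corollary is a formal consequence of the four preceding results. The only subtlety worth flagging is that the vertical arrows $i^\prime$ and $i^{\prime\prime\prime}$ are not a priori inclusions of \emph{full} subcategories, so the argument relies on commutativity of the square at the level of functors rather than on any stronger identification of morphism sets.
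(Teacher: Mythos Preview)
Your proposal is correct and follows essentially the same approach as the paper: both arguments use the same commutative square of inclusions (yours is merely the transpose of the paper's) together with the same four inputs ($\pi_0(i^\prime)$ and $\pi_0(j)$ bijective, $\pi_0(i^{\prime\prime})$ surjective, $\pi_0(i^{\prime\prime\prime})$ injective) to finish by a formal diagram chase. The paper's phrasing is marginally more compressed, but there is no substantive difference.
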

\begin{proof}
The map $\pi_0(i^{\prime\prime\prime})$ is injective by the previous Lemma. Consider
the commutative square
\begin{displaymath}
\xymatrix{ \pi_0 H_{d-\hyp}(x, z) \ar[r]^(0.55){\pi_0(i^\prime)}
              \ar[d]_{\pi_0(i^{\prime\prime})}  &
              \pi_0 H_d(x, z) \ar[d]^{\pi_0(i^{\prime\prime\prime})}  \\
           \pi_0 H_{\hyp}(x, z) \ar[r]^{\pi_0(j)}   &  \pi_0 H(x, z) }
\end{displaymath}
induced by the corresponding inclusions. The top and bottom maps are bijections and
the lefthand vertical map is surjective, so $\pi_0(i^{\prime\prime\prime})$ is
surjective, hence bijective by the previous Lemma. But then the composite
$\pi_0(i^{\prime\prime\prime})\pi_0(i^\prime)$ is bijective so
$\pi_0(i^{\prime\prime})$ must also be injective, hence bijective.
\end{proof}

Recall as above that $H_{\bihyp}(x, y)$ is a full subcategory of $H_d(x, y)$ for
any fixed choice of pointed simplicial sheaves $x$ and $y$. Letting
\begin{displaymath}
i: H_{\bihyp}(x, y) \hra H_d(x, y)
\end{displaymath}
denote the inclusion functor, one therefore knows that the induced map $\pi_0(i)$
on path components is injective. To move towards bijectivity one begins with an
analogue of Corollary \ref{diagequiv}:

\begin{lem} \label{bkequiv}
If $f: X \to Y$ is a weak equivalence of fibrant simplicial sets then the induced
map $d_\ast(f): d_\ast X \to d_\ast Y$ is a degreewise weak equivalence of
bisimplicial sets.
\end{lem}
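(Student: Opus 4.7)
The plan is to identify the rows and columns of $d_\ast X$ explicitly as function complexes and then apply the simplicial model structure on $\sSet$ (specifically SM7) to conclude. First, using the standard computation $d(\Delta^{m,n}) = \Delta^m \times \Delta^n$ and the defining adjunction, one has
\begin{displaymath}
(d_\ast X)_{m,n} = \Hom_{\sSet}(\Delta^m \times \Delta^n, X).
\end{displaymath}
I would then fix the horizontal degree $m$ and observe that the vertical simplicial set
\begin{displaymath}
(d_\ast X)_{m, \bull} = \Hom_{\sSet}(\Delta^m \times \Delta^{\bull}, X) = X^{\Delta^m},
\end{displaymath}
and dually, fixing $n$, the horizontal simplicial set is $X^{\Delta^n}$. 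So the map $d_\ast(f)$ restricted to the vertical simplicial set at horizontal level $m$ is simply $f^{\Delta^m}: X^{\Delta^m} \to Y^{\Delta^m}$, and similarly in the other direction.

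Next, since $X$ and $Y$ are fibrant, SM7 applied to the cofibration $\emptyset \hra \Delta^m$ and the fibrations $X \to \ast$, $Y \to \ast$ implies that $X^{\Delta^m}$ and $Y^{\Delta^m}$ are fibrant simplicial sets. It remains to check that $f^{\Delta^m}$ is a weak equivalence whenever $f$ is a weak equivalence between fibrant simplicial sets. This is a standard consequence of SM7 and Ken Brown's lemma for the simplicial closed model structure on $\sSet$: the functor $(-)^K$ for any simplicial set $K$ preserves weak equivalences between fibrant objects. Applying this with $K = \Delta^m$ (and, symmetrically, $K = \Delta^n$) shows that $d_\ast(f)$ is a weak equivalence in each horizontal row and each vertical column, which is exactly the degreewise condition.

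There is no real obstacle here: the whole argument reduces, after identifying the rows and columns of $d_\ast X$ as function complexes into $X$, to the standard SM7 statement that exponentiation by an arbitrary simplicial set preserves weak equivalences between fibrant simplicial sets. Note that this is strictly stronger than the diagonal weak equivalence statement of Corollary \ref{diagequiv}, but it is provable by the same kind of formal nonsense, and it does not require any new fibrancy hypothesis beyond what was already imposed on $X$ and $Y$.
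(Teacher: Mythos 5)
Your proof is correct, but it takes a different route from the paper's. The paper argues formally: by Brown's factorization lemma it suffices to show that $d_\ast$ carries trivial fibrations to degreewise trivial fibrations, and this is done by adjointness --- a lifting problem of $d_\ast(f)$ against a Bousfield--Kan (degreewise) cofibration $c$ transposes to a lifting problem of $f$ against $d(c)$, which is a monomorphism and hence a cofibration of simplicial sets. You instead compute the rows of $d_\ast X$ explicitly, using $(d_\ast X)_{m,n} = \Hom_{\sSet}(\Delta^m \times \nsimp, X)$ to identify $(d_\ast X)_{m,\bull}$ with the function complex $X^{\Delta^m}$, and then quote SM7 plus Ken Brown's lemma for the exponentiation functor $(-)^{\Delta^m}$. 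Both arguments use the fibrancy of $X$ and $Y$ in an essential way through Ken Brown's lemma; yours applies it to $(-)^{\Delta^m}$ degree by degree, the paper's applies it to $d_\ast$ itself. What your version buys is concreteness: it makes visible what $d_\ast X$ is levelwise (in fact one can shortcut even further, since $\{0\} \hra \Delta^m$ being a trivial cofibration makes $X^{\Delta^m} \to X$ a trivial fibration, so $f^{\Delta^m}$ is a weak equivalence by two-out-of-three). What the paper's version buys is that the same adjunction-against-cofibrations template is reused nearly verbatim for Lemma \ref{mordfib} and Lemma \ref{bfiblem}, and it transports cleanly through the Boolean localization arguments in the subsequent local statements. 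Your closing remark is also accurate: a degreewise weak equivalence induces a diagonal weak equivalence, so your conclusion does strengthen Corollary \ref{diagequiv}'s statement for $d_\ast$.
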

\begin{proof}
By Brown's factorization lemma it suffices to show that $d_\ast$ sends trivial
fibrations of simplicial sets to degreewise trivial fibrations of bisimplicial
sets. The induced map $d_\ast(f)$ is a degreewise trivial fibration if and only
if it has the right lifting property with respect to all Bousfield-Kan
cofibrations $c: A \to B$. By adjunction such lifting problems correspond to
lifting problems
\begin{displaymath}
\xymatrix{ d(A) \ar[r] \ar[d]_{d(c)} & X \ar[d]^f \\
           d(B) \ar[r] \ar@{.>}[ur]  & Y } 
\end{displaymath}
These all have solutions since $c$ is in particular a pointwise cofibration, hence
$d(c)$ is a cofibration and the lift exists since $f$ was a trivial fibration by
assumption.
\end{proof}

Here is the local version:
\begin{lem} \label{locbkequiv}
If $\beta: z \to z^\prime$ is a pointed local weak equivalence of pointed locally
fibrant simplicial sheaves then $d_\ast(\beta)$ is a pointed degreewise local weak
equivalence of bisimplicial sheaves.
\end{lem}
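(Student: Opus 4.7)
The plan is to imitate the argument of Lemma \ref{locdequiv} almost verbatim, substituting Lemma \ref{bkequiv} for Corollary \ref{diagequiv}. First I would fix a Boolean localization $p: \Shv{\mc{B}} \to \Shv{\mc{C}}$, so that $p^\ast(\beta)$ becomes a sectionwise weak equivalence of sectionwise fibrant simplicial sheaves on $\mc{B}$. Evaluating at an arbitrary section and applying Lemma \ref{bkequiv} shows that $d_\ast(p^\ast(\beta))$ is a sectionwise degreewise weak equivalence of bisimplicial sheaves on $\mc{B}$, that is, in each horizontal degree $m \geq 0$ the map $d_\ast(p^\ast(\beta))_{m,\ast}$ is a sectionwise weak equivalence of simplicial sheaves.

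Next I would identify $p^\ast(d_\ast(\beta))$ with $d_\ast(p^\ast(\beta))$. Just as in Lemma \ref{locdequiv}, in bisimplicial degree $(m, n)$ the map $d_\ast(\beta)$ is given by the sheaf map $\beta^{\Delta^m \times \nsimp}$; since $p^\ast$ is exact and preserves the constant sheaves $\Delta^m \times \nsimp$, there is a natural isomorphism $p^\ast(\beta^{\Delta^m \times \nsimp}) \cong (p^\ast\beta)^{\Delta^m \times \nsimp}$ in each bidegree, hence a natural identification $p^\ast(d_\ast(\beta)) \cong d_\ast(p^\ast(\beta))$. Combining this with the previous step, $p^\ast(d_\ast(\beta))_{m,\ast}$ is a sectionwise weak equivalence for every $m \geq 0$, so by the characterization of local weak equivalences via Boolean localization (cf. \cite{Jardine16}) each map $d_\ast(\beta)_{m,\ast}$ is a local weak equivalence of simplicial sheaves on $\mc{C}$. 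This is precisely the assertion that $d_\ast(\beta)$ is a degreewise local weak equivalence, and pointedness is preserved trivially.

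There is no real obstacle here beyond bookkeeping: the only thing to check carefully is that the transition from sectionwise degreewise weak equivalence on $\mc{B}$ to degreewise local weak equivalence on $\mc{C}$ commutes appropriately with the horizontal slicing, which is immediate from the natural isomorphism above applied in each horizontal degree $m$ separately.
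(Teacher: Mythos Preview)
Your proposal is correct and follows essentially the same route as the paper's own proof: fix a Boolean localization, apply Lemma \ref{bkequiv} sectionwise to obtain that $d_\ast(p^\ast(\beta))$ is a degreewise weak equivalence in each section, invoke the commutation argument from Lemma \ref{locdequiv} to identify this with $p^\ast(d_\ast(\beta))$, and conclude degreewise local weak equivalence; your write-up is in fact slightly more explicit about the horizontal slicing than the paper's.
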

\begin{proof}
Fix a Boolean localization $p: \Shv{\mc{B}} \to \Shv{\mc{C}}$. Then $p^\ast(\beta)$
is a sectionwise weak equivalence of sectionwise fibrant simplicial sheaves on
$\mc{B}$ so $d_\ast(p^\ast(\beta))$ is a degreewise weak equivalence in each
section by Lemma \ref{bkequiv}. By the argument of Lemma \ref{locdequiv} it follows
that $p^\ast(d_\ast(\beta))$ is also a degreewise weak equivalence in each section,
or equivalently a sectionwise weak equivalence in each degree, so $d_\ast(\beta)$
is a local weak equivalence in each degree. The map $d_\ast(\beta)$ is automatically
pointed so the result follows.
\end{proof}

\begin{lem} \label{bfiblem}
If $f: x \to y$ is a pointed local fibration of simplicial sheaves, then $d_\ast(f)$
is a pointed degreewise local fibration of bisimplicial sheaves.
\end{lem}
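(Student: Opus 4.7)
The plan is to follow the template set by Lemmas \ref{locdequiv} and \ref{locbkequiv}: first establish the corresponding fact for plain simplicial sets, then globalize via Boolean localization. To begin, I would prove the non-sheafy version: if $g: X \to Y$ is a Kan fibration of simplicial sets, then $d_\ast(g)$ is a degreewise fibration of bisimplicial sets, meaning each row $d_\ast(g)_{m, \bull}$ is a fibration of simplicial sets. The key observation here is the natural identification
\begin{displaymath}
(d_\ast X)_{m, n} = \Hom(\Delta^{m, n}, d_\ast X) = \Hom(\Delta^m \times \Delta^n, X) \cong (X^{\Delta^m})_n,
\end{displaymath}
so that the $m$-th row of $d_\ast(g)$ is naturally isomorphic to the function-complex map $g^{\Delta^m}$. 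Since $\Delta^m$ is cofibrant and $g$ is a fibration, SM7 (equivalently, the pushout-product axiom applied to $\emptyset \to \Delta^m$) gives that $g^{\Delta^m}$ is a fibration of simplicial sets.

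Having this in hand, I would fix a Boolean localization $p: \Shv{\mc{B}} \to \Shv{\mc{C}}$. Since $f$ is a local fibration, $p^\ast(f)$ is a sectionwise fibration of simplicial sheaves on $\mc{B}$. Applying the simplicial-set step in each section, one obtains that $d_\ast(p^\ast(f))$ is a sectionwise degreewise fibration of bisimplicial sheaves on $\mc{B}$. As in the proof of Lemma \ref{locdequiv}, the exactness of $p^\ast$ together with the fact that $p^\ast$ preserves the constant sheaves associated to $\Delta^m \times \Delta^n$ provides a bidegreewise natural isomorphism between $p^\ast(d_\ast(f))$ and $d_\ast(p^\ast(f))$. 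Therefore $p^\ast(d_\ast(f))_{m, \bull}$ is a sectionwise fibration for each $m \geq 0$, which means $d_\ast(f)_{m, \bull}$ is a local fibration on $\mc{C}$ for each $m$. Pointedness is inherited trivially from that of $f$.

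There is no genuine obstacle here: both the SM7 input and the Boolean-localization bookkeeping have already been exploited in the immediately preceding lemmas, so the only thing to verify carefully is the identification of the $m$-th row of $d_\ast X$ with the function complex $X^{\Delta^m}$, which is a routine adjunction computation. The proof is essentially a ``fibration version'' of Lemma \ref{locbkequiv}, with SM7 playing the role that Brown's factorization lemma played there.
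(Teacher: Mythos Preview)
Your proposal is correct and follows the same overall architecture as the paper: reduce to a sectionwise statement via Boolean localization, prove the corresponding fact for simplicial sets, and then transport it back using the identification $p^\ast d_\ast \cong d_\ast p^\ast$ from Lemma~\ref{locdequiv}.

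The only difference lies in how the simplicial-set step is justified. You identify the $m$-th row of $d_\ast(g)$ with the function complex $g^{\Delta^m}$ and invoke SM7 directly. The paper instead runs the $(d, d_\ast)$ adjunction: $d_\ast(g)$ has the right lifting property against a Bousfield--Kan trivial cofibration $c$ iff $g$ has it against $d(c)$, and since the diagonal of a degreewise trivial cofibration is a trivial cofibration of simplicial sets, the lift exists. These two arguments are really the same fact viewed from opposite ends of the adjunction---your SM7 computation unwinds to exactly the lifting problem the paper solves---so there is no substantive divergence. Your version has the mild advantage of making the row-by-row identification explicit, which is pleasant; the paper's version avoids writing down the function-complex isomorphism but appeals to a slightly less elementary input (that $d$ takes degreewise trivial cofibrations to trivial cofibrations).
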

\begin{proof}
Fix a Boolean localization $p: \Shv{\mc{B}} \to \Shv{\mc{C}}$. Then $p^\ast(f)$
is a sectionwise fibration of simplicial sheaves on $\mc{B}$. An adjunction
argument (starting from the fact that the diagonal of a degreewise trivial
cofibration is a trivial cofibration) shows that $d_\ast(p^\ast(f))$ is a
degreewise fibration in each section, or alternatively a sectionwise fibration in
each degree. Thus $p^\ast(d_\ast(f))$ is a sectionwise fibration in each
degree so $d_\ast(f)$ is a trivial fibration in each degree, as was to be shown.
\end{proof}

One has the following analogue of Lemma \ref{lwed}:
\begin{lem} \label{blwed}
If $\beta: z \to z^\prime$ is a local weak equivalence of pointed locally fibrant
simplicial sheaves then the induced functor
\begin{displaymath}
\beta_\ast: BH_{\bihyp}(x, z) \to BH_{\bihyp}(x, z^\prime)
\end{displaymath}
is a homotopy equivalence.
\end{lem}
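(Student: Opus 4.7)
The plan is to mimic the proof of Lemma \ref{lwed} essentially verbatim, but with the Moerdijk factorization in sections replaced by a Bousfield-Kan factorization in sections. The swap is essential: a Moerdijk factorization only produces an intermediate object in $H_d(x, -)$, whereas here the intermediate object must live in the smaller full subcategory $H_{\bihyp}(x, -)$, which requires the middle map to be a genuine bisimplicial hypercover and not just a diagonal local weak equivalence.

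For the first step, given an object $x \xleftarrow{f} u \xrightarrow{\tilde{p}} d_\ast(z^\prime)$ of $H_{\bihyp}(x, z^\prime)$ with $f$ a pointed bisimplicial hypercover, I would factor $(f, \tilde{p}): u \to x \times d_\ast(z^\prime)$ sectionwise in the Bousfield-Kan structure as
\begin{displaymath}
u \xrightarrow{c} w \xrightarrow{(p_x, \tilde{p}_{z^\prime})} x \times d_\ast(z^\prime),
\end{displaymath}
where $c$ is a sectionwise degreewise trivial cofibration and $(p_x, \tilde{p}_{z^\prime})$ is a sectionwise degreewise fibration. The central verification is that $p_x$ is then itself a bisimplicial hypercover: in each horizontal degree $m$, the relation $p_{x,m} \circ c_m = f_m$ combined with the facts that $c_m$ is a sectionwise trivial cofibration and $f_m$ is a hypercover forces $p_{x,m}$ to be a local weak equivalence, and Lemma \ref{bfiblem} applied to $z^\prime \to \ast$ makes $\pr_x: x \times d_\ast(z^\prime) \to x$ a degreewise local fibration, so $p_{x,m}$ is also a local fibration.

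Next, I would pull back along $1 \times d_\ast(\beta)$, which is a pointed degreewise local weak equivalence by Lemma \ref{locbkequiv}, to form the square
\begin{displaymath}
\xymatrix@=40pt{ w^\prime \ar[r]^{(1 \times d_\ast(\beta))_\ast}
                   \ar[d]_{(p_x^\ast, \tilde{p}_z^\ast)} &
                 w \ar[d]^{(p_x, \tilde{p}_{z^\prime})} \\
                 x \times d_\ast(z) \ar[r]_{1 \times d_\ast(\beta)}  &
                 x \times d_\ast(z^\prime). }
\end{displaymath}
Since pullbacks of bisimplicial sheaves are computed degreewise, right properness of the local model structure on simplicial sheaves applied in each horizontal degree gives that $(1 \times d_\ast(\beta))_\ast$ is a degreewise local weak equivalence; the identity $p_x^\ast = p_x \circ (1 \times d_\ast(\beta))_\ast$ together with Lemma \ref{bfiblem} applied to $z \to \ast$ then shows that $p_x^\ast$ is again a bisimplicial hypercover. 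This defines the candidate inverse functor $\tilde{\psi}: H_{\bihyp}(x, z^\prime) \to H_{\bihyp}(x, z)$.

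Finally, exactly as at the end of the proof of Lemma \ref{lwed}, the canonical maps
\begin{displaymath}
(f, \tilde{p}) \to (p_x, \tilde{p}_{z^\prime}) \leftarrow \beta_\ast \tilde{\psi}(f, \tilde{p})
\quad \text{and} \quad
(g, \tilde{q}) \to \tilde{\psi} \beta_\ast (g, \tilde{q})
\end{displaymath}
(the latter using the universal property of $w^\prime$ as a pullback) would assemble into natural transformations exhibiting $\tilde{\psi}$ and $\beta_\ast$ as mutually inverse homotopy equivalences on nerves. The genuine obstacle lies entirely in the first step: arranging that the Bousfield-Kan factorization produces a bisimplicial hypercover rather than merely a diagonal local weak equivalence. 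Lemmas \ref{bfiblem} and \ref{locbkequiv} were installed precisely to clear this hurdle, and once they are applied the rest of the argument transcribes directly from Lemma \ref{lwed}.
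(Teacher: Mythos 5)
Your proposal is correct and follows the paper's proof essentially verbatim: the same sectionwise degreewise (Bousfield--Kan) factorization, the same use of Lemmas \ref{locbkequiv} and \ref{bfiblem} to verify that $p_x$ and $p_x^\ast$ are bisimplicial hypercovers, and the same transcription of the natural transformations from Lemma \ref{lwed}. The only cosmetic difference is that you justify the invariance of degreewise local weak equivalences under pullback along the sectionwise degreewise fibration by right properness, where the paper cites a Boolean localization argument; these amount to the same thing.
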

\begin{proof}
The proof is analogous to that of Lemma \ref{lwed}: the functor $\beta_\ast$ is
defined by composition with $\beta$ and the induced map $d_\ast(\beta)$ is a
degreewise local weak equivalence by Lemma \ref{locbkequiv}. Supposing one has a
``cocycle'' of the form
\begin{displaymath}
x \xleftarrow{f} u \xrightarrow{\tilde{p}} d_\ast(z^\prime)
\end{displaymath}
where $f$ is a bisimplicial hypercover, one factors the map $(f, \tilde{p})$
as a sectionwise degreewise weak equivalence $c$ followed by a sectionwise
degreewise fibration $(p_x, \tilde{p}_{z^\prime})$. The map $p_x$ is a degreewise
local weak equivalence since $c$ and $f$ are, and is a degreewise local fibration
since it equals the composite
\begin{displaymath}
w \xrightarrow{(p_x, \tilde{p}_{z^\prime})} x \times d_\ast(z^\prime)
   \xrightarrow{\pr_L} x
\end{displaymath}
where the former map is a sectionwise fibration in each degree and the latter map
is a degreewise local fibration by Lemma \ref{bfiblem}; thus $p_x$ is again a
bisimplicial hypercover. The map $1_x \times d_\ast(\beta)$ is a degreewise local
weak equivalence so its pullback $(1_x \times d_\ast(\beta))_\ast$ along
$(p_x, \tilde{p}_{z^\prime})$ is also a degreewise local weak equivalence
by a Boolean localization argument. The other pullback map
$(p_x^\ast, \tilde{p}_z^\ast)$ is a sectionwise degreewise fibration since
$(p_x, \tilde{p}_{z^\prime})$ was, so $p_x^\ast$ is also a degreewise local
fibration. The map $p_x^\ast$ is also a degreewise local weak equivalence since
$p_x$, $1_x \times d_\ast(\beta)$, and $(1_x \times d_\ast(\beta))_\ast$ are, so
$p_x^\ast$ is also a bisimplicial hypercover. This construction determines the
functor $\tilde{\psi}$, and the remainder of the argument follows the proof of
Lemma \ref{lwed} verbatim.
\end{proof}

\begin{lem} \label{bhlem}
Suppose $x$ and $y$ are pointed simplicial sheaves on a pointed small Grothendieck
site $\mc{C}$ with $y$ locally fibrant. Then the map
\begin{displaymath}
\pi_0(i): \pi_0 H_{\bihyp}(x, y) \to \pi_0 H_d(x, y)
\end{displaymath}
induced by inclusion is a bijection.
\end{lem}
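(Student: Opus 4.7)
Since $H_{\bihyp}(x, y)$ is a full subcategory of $H_d(x, y)$ under $i$, my plan is to construct, for each object $(f, \tilde{p})$ of $H_d(x, y)$, a morphism in $H_d$ from some object of $H_{\bihyp}(x, y)$. With enough naturality in $(f, \tilde{p})$, such a construction gives surjectivity of $\pi_0(i)$ directly and injectivity by applying the construction to each intermediate object in a zigzag of morphisms in $H_d$ between two objects of $H_{\bihyp}$, using fullness of the inclusion to lift the resulting zigzag into $H_{\bihyp}$.

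The construction proceeds by analogy with the surjectivity argument for $\pi_0(i^\prime)$ preceding Corollary \ref{iprime}, but with the sectionwise Moerdijk factorization replaced by one designed to produce a bisimplicial hypercover rather than a diagonal hypercover. Identifying each object with its adjoint form $(f, \tilde{p}) : u \to x \times d_\ast(y)$, I would factor $(f, \tilde{p})$ sectionwise in the Bousfield-Kan closed model structure on bisimplicial sets as $(f, \tilde{p}) = (p_x, \tilde{p}_y) \circ c$ with $c$ a sectionwise degreewise trivial cofibration and $(p_x, \tilde{p}_y)$ a sectionwise degreewise fibration. A Boolean localization argument in the style of Lemma \ref{locbkequiv} then shows $c$ is a degreewise local weak equivalence and $(p_x, \tilde{p}_y)$ a degreewise local fibration; Lemma \ref{bfiblem} applied to $y \to \ast$ gives that $\pr_x$ is a degreewise local fibration, so the composite $p_x = \pr_x \circ (p_x, \tilde{p}_y)$ is a degreewise local fibration, and $c$ provides a morphism of cocycles in $H_d(x, y)$ from $(f, \tilde{p})$ to $(p_x, \tilde{p}_y)$.

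The main obstacle is upgrading $p_x$ from a diagonal local weak equivalence (which it becomes by two-out-of-three applied to $f = p_x \circ c$ since $f$ and $c$ are) to a degreewise local weak equivalence, as this is what would make it a bisimplicial hypercover and place $(p_x, \tilde{p}_y)$ inside $H_{\bihyp}(x, y)$. In contrast to the setup of Lemma \ref{blwed}, where the input $f$ is already a bisimplicial hypercover and hence degreewise, here $f$ is only diagonal and the naive two-out-of-three is not enough. My plan for closing this gap is to augment the factorization by a preliminary replacement of $u$, exploiting that after Boolean localization $y$ becomes sectionwise Kan so that (by the proof of Lemma \ref{bfiblem} via Lemma \ref{bkequiv}) $d_\ast(y)$ acquires the degreewise fibrancy needed to extend $\tilde{p}$ along suitable degreewise trivial cofibrations, allowing the final factorization to deliver $p_x$ as a degreewise local weak equivalence. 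Once $p_x$ is seen to be a bisimplicial hypercover, $(p_x, \tilde{p}_y)$ is the required object of $H_{\bihyp}(x, y)$ and $c$ is the required morphism in $H_d(x, y)$, completing both surjectivity and, via the zigzag argument above, injectivity.
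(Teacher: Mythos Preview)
Your identification of the obstacle is exactly right, but your plan for closing it is too vague to constitute a proof. The phrase ``augment the factorization by a preliminary replacement of $u$'' is doing all the work, and you never say what that replacement is or why it forces $p_x$ to be a \emph{degreewise} local weak equivalence. Local (even sectionwise) fibrancy of $d_\ast(y)$ lets you extend along \emph{sectionwise} degreewise trivial cofibrations, but that does not repair the fact that $f$ is only a diagonal weak equivalence: any two-out-of-three argument from $f = p_x c$ will only ever give you that $p_x$ is diagonally trivial. No amount of factoring the pair $(f,\tilde{p})$ will produce degreewise triviality of $p_x$ out of a merely diagonal $f$.

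The paper sidesteps this entirely by factoring $f$ \emph{alone} (not the pair) as a degreewise cofibration $c$ followed by a degreewise \emph{trivial} fibration $h$; then $h$ is a bisimplicial hypercover for free, with no two-out-of-three needed. The price is that one must now extend the right-hand leg $p$ along $d(c)$, and $d(c)$ is only a \emph{local} trivial cofibration, so locally fibrant $y$ is not enough: one first assumes $y$ globally fibrant to get the lift, and then reduces the locally fibrant case to the globally fibrant one via the square
\[
\xymatrix{
\pi_0 H_{\bihyp}(x, y) \ar[r]^{\pi_0(i)} \ar[d]_{\beta_\ast} &
\pi_0 H_d(x, y) \ar[d]^{\beta_\ast} \\
\pi_0 H_{\bihyp}(x, z) \ar[r]^{\pi_0(i)} & \pi_0 H_d(x, z)
}
\]
for a globally fibrant model $\beta: y \to z$, using Lemmas~\ref{blwed} and~\ref{lwed} for the vertical bijections. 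Also note that injectivity of $\pi_0(i)$ is immediate from fullness of the inclusion (as remarked just before Lemma~\ref{bkequiv}), so your zigzag argument for injectivity, while not wrong, is unnecessary; only surjectivity is at issue.
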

\begin{proof}
First assume that $y$ is globally fibrant. Starting with any ``cocycle''
\begin{displaymath}
x \xleftarrow{f} u \xrightarrow{\tilde{p}} d_\ast(y)
\end{displaymath}
corresponding to an object of $H_d(x, y)$, factor $f$ as a sectionwise degreewise
cofibration $c$ followed by a sectionwise degreewise trivial fibration $h$.
Then $h$ is a bisimplicial hypercover and one has an induced map of cocycles
\begin{displaymath}
\xymatrix{  &  d(u) \ar[dl]_{d(f)} \ar[dr]^p \ar[dd]^{d(c)}  &  \\
           x  &  &  y \\
            &  d(v) \ar[ul]^{d(h)} \ar@{.>}[ur] & }
\end{displaymath}
where $d(f)$ and $d(h)$ are local weak equivalences so $d(c)$ is a local weak
equivalence as well as a cofibration and the lift therefore exists since $y$ was
globally fibrant by assumption.

More generally, suppose $y$ is locally fibrant and fix a pointed globally fibrant
replacement $\beta: y \to z$ for $y$. Then there is a commutative square
\begin{displaymath}
\xymatrix{ \pi_0 H_{\bihyp}(x, y) \ar[r]^(0.55){\pi_0(i)} \ar[d]_{\beta_\ast} &
           \pi_0 H_d(x, y) \ar[d]^{\beta_\ast} \\
           \pi_0 H_{\bihyp}(x, z) \ar[r]^(0.55){\pi_0(i)}  &  \pi_0 H_d(x, z) }
\end{displaymath}
where both vertical maps $\beta_\ast$ are induced by composition with $\beta$
so they are bijections by Lemmas \ref{blwed} and \ref{lwed}, and the bottom
map is a bijection by the previous paragraph so the top map is a bijection as
well, as was to be shown.
\end{proof}

The results above may be summarized as follows:
\begin{thm} \label{thmbijs}
Suppose $x$ and $y$ are pointed simplicial sheaves on a pointed small Grothendieck
site $\mc{C}$ where $y$ is locally fibrant. Then there are canonical bijections
\begin{displaymath}
\pi_0 H_{\bihyp}(x, y) \cong \pi_0 H_{d-\hyp}(x, y) \cong \pi_0 H_d(x, y) \cong
\pi_0 H_{\hyp}(x, y) \cong \pi_0 H(x, y) \cong [x, y].
\end{displaymath}
\end{thm}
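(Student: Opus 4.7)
The plan is simply to assemble the chain of bijections from the sequence of results established earlier in this section, together with the cocycle theorem of Jardine. No new substantive argument is required: every link in the claimed chain has already been produced by a previous lemma or corollary, so the only task is to thread them together in the right order and verify that the composite bijection matches the canonical comparison map one expects.

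Concretely, I would proceed from right to left. The identification $\pi_0 H(x, y) \cong [x, y]$ is the main cocycle theorem of \cite{Jardine4}, applicable since $y$ is locally fibrant. The bijection $\pi_0 H_{\hyp}(x, y) \cong \pi_0 H(x, y)$ induced by the inclusion $j$ is recorded just before the definition of $H_{d-\hyp}(x, y)$ and relies on Lemma $4$ of \cite{Jardine13}, which functorially replaces the left leg of any cocycle (into a locally fibrant target) by a hypercover without changing the path component. Corollary \ref{iprime} then provides simultaneously the two bijections $\pi_0 H_d(x, y) \cong \pi_0 H(x, y)$ (via $i^{\prime\prime\prime}$) and $\pi_0 H_{d\textrm{-}\hyp}(x, y) \cong \pi_0 H_{\hyp}(x, y)$ (via $i^{\prime\prime}$); note this already incorporated the injectivity argument using the unit $\eta \colon X \to dd^\ast X$ being a weak equivalence (Lemma \ref{unitmap}) and the surjectivity argument via Moerdijk factorization. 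Finally, Lemma \ref{bhlem} gives the remaining bijection $\pi_0 H_{\bihyp}(x, y) \cong \pi_0 H_d(x, y)$ induced by the inclusion $i$.

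All five claimed comparison maps arise from the evident inclusion functors between cocycle categories, so composing the bijections above yields the full chain asserted in the statement. There is no genuine obstacle: the difficulty was already absorbed into the preparatory lemmas, where the delicate point was controlling diagonals of Moerdijk fibrations and cofibrations under Boolean localization in order to conclude that $d_\ast$ preserves local fibrations and local weak equivalences of locally fibrant objects (Lemmas \ref{locdequiv}, \ref{locbkequiv}, \ref{bfiblem}) and that the unit of $(d^\ast, d)$ is a weak equivalence (Lemma \ref{unitmap}). Granted those facts, the theorem reduces to bookkeeping.
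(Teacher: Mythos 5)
Your proposal is correct and matches the paper's own proof, which simply cites Theorem $1$ of \cite{Jardine4} for the last bijection and notes that the remaining bijections were already established in the preceding lemmas (Lemma \ref{bhlem}, Corollary \ref{iprime}, and the $\pi_0(j)$ and $\pi_0(i^\prime)$ bijections). Your right-to-left assembly of these results is exactly the intended argument.
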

\begin{proof}
The latter bijection is a consequence of Theorem $1$ of \cite{Jardine4}. The
remaining bijections have already been established.
\end{proof}

\begin{cor}
With the hypotheses of Theorem \ref{thmbijs}, there are canonical bijections
\begin{displaymath}
\pi_0 H_{\bihyp}(x, y) \xrightarrow{\cong} \pi_0 H_{\bihyp}(x^\prime, y^\prime)
\end{displaymath}
induced by any two local weak equivalences
$\alpha: x \to x^\prime$, $\beta: y \to y^\prime$ of pointed locally fibrant
simplicial sheaves on $\mc{C}$.
\end{cor}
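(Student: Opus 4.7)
The plan is to reduce the corollary to the standard fact that local weak equivalences induce isomorphisms in the pointed local homotopy category, with Theorem \ref{thmbijs} performing the reduction. Since $y$ and $y'$ are locally fibrant, that theorem supplies canonical bijections $\pi_0 H_{\bihyp}(x, y) \cong [x, y]$ and $\pi_0 H_{\bihyp}(x', y') \cong [x', y']$, where the brackets denote morphism sets in the pointed local homotopy category of simplicial sheaves on $\mc{C}$. It therefore suffices to produce a canonical bijection $[x, y] \cong [x', y']$ from $\alpha$ and $\beta$.

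Being local weak equivalences between locally fibrant objects, $\alpha$ and $\beta$ become isomorphisms in the pointed local homotopy category. Hence the assignment $[f] \mapsto [\beta]\circ[f]\circ[\alpha]^{-1}$ defines a bijection $[x, y] \to [x', y']$ canonically determined by $\alpha$ and $\beta$, which upon transport through the two identifications of Theorem \ref{thmbijs} yields the required bijection on $\pi_0 H_{\bihyp}$.

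Concretely, one can factor this bijection as
\[
\pi_0 H_{\bihyp}(x, y) \xrightarrow{\beta_*} \pi_0 H_{\bihyp}(x, y')
\xleftarrow{\cong} \pi_0 H_{\bihyp}(x', y'),
\]
where $\beta_*$ is postcomposition with $\beta$ (a bijection by Lemma \ref{blwed}) and the second arrow is the bijection transported from the evident pullback bijection $\alpha^*: [x', y'] \to [x, y']$ via Theorem \ref{thmbijs}. The only real subtlety, and the reason one cannot avoid passing through the Jardine identification, is that $\alpha$ runs in the ``wrong'' direction to induce a functor on $H_{\bihyp}(-, y')$ directly: a bisimplicial hypercover of $x$ does not immediately produce one of $x'$, so a cocycle $x \leftarrow d(u) \to y'$ cannot simply be precomposed with $\alpha$ to give a cocycle with source $x'$. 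Routing this half of the construction through the identification with $[x, y']$ sidesteps this technicality and completes the argument.
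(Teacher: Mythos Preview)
Your argument is correct and follows essentially the same route as the paper: both reduce via Theorem \ref{thmbijs} to the known invariance of the larger cocycle category (the paper cites Lemma~1 of \cite{Jardine4} for $\pi_0 H(x,y)$, while you pass all the way to $[x,y]$, which is the same thing after Theorem~1 of \cite{Jardine4}). Your additional paragraph factoring the bijection through $\beta_\ast$ and an $\alpha^\ast$ transported from the homotopy category is more explicit than the paper's one-line proof but adds nothing essential.
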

\begin{proof}
Use the analogous property for $H(x, y)$, proven in Lemma $1$ of \cite{Jardine4}.
\end{proof}

\section{Applications to \'etale homotopy theory}

Inspired by \cite{Jardine13}, consider the cocycle category
$H_{\bihyp}^{h^\prime}(x, y)$ whose objects are cocycles of the form
\begin{displaymath}
x \xleftarrow{d(f)} d(u) \xrightarrow{[p]} y
\end{displaymath}
for pointed bisimplicial hypercovers $u$ of $x$ and whose morphisms are diagrams
\begin{displaymath}
\xymatrix@=20pt{  &   d(u) \ar[dl]_{d(f)} \ar[dr]^{[p]} \ar[dd]^{d([m])} &  \\
           x  &   &   y \\
	    &   d(u^\prime) \ar[ul]^{d(f^\prime)} \ar[ur]_{[p^\prime]}  &  }
\end{displaymath}
where square brackets indicate simplicial homotopy classes of maps, the middle maps
are induced by fibrewise simplicial homotopy classes $[m]: u \to u^\prime$ of maps
of pointed bisimplicial hypercovers of $x$, and $[p^\prime][d(m)] = [p]$ as
pointed simplicial homotopy classes. The set of path components
$\pi_0 H_{\bihyp}^{h^\prime}(x, y)$ is given by the colimit
\begin{displaymath}
\colim_{d(f):\, d(u) \to x}{\pi(d(u), y)}
\end{displaymath}
whose index category is that of pointed bisimplicial hypercovers of $x$ and pointed
fibrewise simplicial homotopy classes of maps between them. The functor
$\omega: H_{\bihyp}(x, y) \to H_{\bihyp}^{h^\prime}(x, y)$ defined on objects by
$(d(f), p) \mapsto (d(f), [p])$ is obviously surjective on path components.

Introduce another cocycle category $H_{\bihyp}^h(x, y)$ whose objects are of the
form
\begin{displaymath}
x \xleftarrow{[d(f)]} d(u) \xrightarrow{[p]} y
\end{displaymath}
and whose morphisms are commutative diagrams
\begin{displaymath}
\xymatrix@=20pt{  &   d(u) \ar[dl]_{[d(f)]} \ar[dr]^{[p]} \ar[dd]^{[d(m)]} &  \\
           x  &   &   y \\
	    &   d(u^\prime) \ar[ul]^{[d(f^\prime)]} \ar[ur]_{[p^\prime]}  &  }
\end{displaymath}
where the middle maps are induced by maps $m: u \to u^\prime$ of pointed
bisimplicial hypercovers of $x$. One readily verifies that the maps
$(d(f), [p]) \mapsto ([d(f)], [p])$ on objects and $d([m]) \mapsto [d(m)]$ on
morphisms determine a functor $\omega^\prime: H_{\bihyp}^{h^\prime}(x, y)
\to H_{\bihyp}^h(x, y)$ which again is obviously surjective on path components.

To relate these observations back to a homotopy category, suppose that the point
$x: \Set \to \Shv{\mc{C}}$ comes from an object $\Omega$ of $\mc{C}$ representing
a sheaf, in the sense that the inverse image functor $x^\ast$ is given by a
composite
\begin{displaymath}
\Shv{\mc{C}} \xrightarrow{?_{|\Omega}} \Shv{\mc{C}/\Omega} \xrightarrow{\Gamma_\ast} \Set
\end{displaymath}
defined by first restricting to the site $\mc{C}/\Omega$ and then taking global
sections. This is exactly the situation in \'etale homotopy theory when one works
on a ``big'' \'etale site containing the separably closed field
$\Omega \deq \Spec{\Omega}$ which is used to give the geometric point $x$ of the
base scheme or DM stack $S$. A \emph{pointed} (bi)simplicial sheaf $(X, z)$ on such
a site $(\mc{C}, x)$ then corresponds exactly to a section
\begin{displaymath}
\Omega \xrightarrow{z} X
\end{displaymath}
where $\Omega = \Omega \xrightarrow{x} S$ is the object of $\mc{C}$ corresponding
to the point $x$, and a pointed map $(X, z) \to (Y, z^\prime)$ corresponds exactly
to a map $X \to Y$ respecting the sections $z$ and $z^\prime$. By general nonsense
there is a closed model structure on the category $\Omega/\sShv{\mc{C}}$ of pointed
simplicial sheaves where the fibrations (resp. cofibrations, resp. weak
equivalences) are those maps $(X, z) \to (Y, z^\prime)$ under $\Omega$ such that
the underlying maps $X \to Y$ are fibrations (resp. cofibrations, resp. weak
equivalences). Lemma $1$ of \cite{Jardine13} then says that the canonical map
\begin{displaymath}
\pi_0 H(x, y) \to [x, y]
\end{displaymath}
defined by sending any pointed cocycle $(f, g)$ to the composite $gf^{-1}$ is a
bijection, where $[x, y]$ denotes morphisms in $\Ho{\Omega/\sShv{\mc{C}}}$.

\begin{lem} \label{lotsobijs}
Suppose $(\mc{C}, x)$ is a pointed small Grothendieck site such that the point
$x: \Set \to \Shv{\mc{C}}$ is determined by an object $\Omega$ of $\mc{C}$ as above
which represents a sheaf (or suppose $\Omega = \emptyset$). Then for any two pointed
simplicial sheaves $x$ and $y$ on $\mc{C}$ with $y$ locally fibrant there are
canonical bijections
\begin{displaymath}
\pi_0 H_{\bihyp}(x, y) \xrightarrow{\pi_0(\omega)}
\pi_0 H_{\bihyp}^{h^\prime}(x, y) \xrightarrow{\pi_0(\omega^\prime)}
\pi_0 H_{\bihyp}^h(x, y) \xrightarrow{p} [x, y].
\end{displaymath}
\end{lem}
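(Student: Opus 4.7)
The plan is to show that the composite
\begin{displaymath}
p \circ \pi_0(\omega^\prime) \circ \pi_0(\omega) : \pi_0 H_{\bihyp}(x, y) \to [x, y]
\end{displaymath}
agrees with the bijection $\pi_0 H_{\bihyp}(x, y) \cong [x, y]$ furnished by Theorem~\ref{thmbijs} (combined with the pointed Jardine bijection $\pi_0 H(x, y) \cong [x, y]$ recalled immediately above), and then to exploit the already-noted surjectivity of $\pi_0(\omega)$ and $\pi_0(\omega^\prime)$ to conclude that each of the three maps is individually a bijection.

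First I would verify that $p$ descends well-definedly to $\pi_0$. For an object $([d(f)], [p])$ of $H_{\bihyp}^h(x, y)$, the map $d(f)$ is a local weak equivalence by Proposition~\ref{diagwe}, hence invertible in $\Ho{\Omega/\sShv{\mc{C}}}$; simplicially homotopic pointed maps coincide as morphisms there, so $[p][d(f)]^{-1}$ depends only on the two classes. Since the morphisms of $H_{\bihyp}^h(x, y)$ are already homotopy commutative, this composite is constant on path components.

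Next, chasing a cocycle $(d(f), p) \in H_{\bihyp}(x, y)$ through the definitions shows that $p \circ \pi_0(\omega^\prime) \circ \pi_0(\omega)$ carries it to $[p][d(f)]^{-1}$ in $[x, y]$. On the other hand, under the chain of bijections of Theorem~\ref{thmbijs}, this same cocycle is pushed through the inclusions $H_{\bihyp}(x, y) \hra H_{d-\hyp}(x, y) \hra H_d(x, y)$ and $i^{\prime\prime\prime} : H_d(x, y) \hra H(x, y)$ to the ordinary cocycle $(d(f), p) \in H(x, y)$, which Lemma~$1$ of \cite{Jardine13} then sends to $[p][d(f)]^{-1}$ in $[x, y]$. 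The two routes produce the same element, so the composite on the left is a bijection.

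Finally, since $\pi_0(\omega)$ is surjective and the composite is injective, $\pi_0(\omega)$ must itself be injective, hence bijective; given this, injectivity of the composite and surjectivity of $\pi_0(\omega^\prime)$ together force $\pi_0(\omega^\prime)$ to be injective, hence bijective; and then $p$ is bijective since it equals the bijective composite precomposed with the inverse of the bijection $\pi_0(\omega^\prime) \circ \pi_0(\omega)$. The main obstacle is really only bookkeeping: one must confirm that the chain of bijections in Theorem~\ref{thmbijs} really does carry the strict cocycle $(d(f), p)$ to its namesake ordinary cocycle in $H(x, y)$, so that the formula $[p][d(f)]^{-1}$ appears unambiguously on both sides.
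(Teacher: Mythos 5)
Your proof is correct and follows essentially the same route as the paper: both arguments factor the composite $p\,\pi_0(\omega^\prime)\,\pi_0(\omega)$ through the inclusion $H_{\bihyp}(x,y) \hra H(x,y)$ (whose $\pi_0$ is a bijection by Lemma \ref{bhlem} and Corollary \ref{iprime}) followed by the canonical map $(f,g) \mapsto gf^{-1}$, and then peel off bijectivity of each factor using the observed surjectivity of $\pi_0(\omega)$ and $\pi_0(\omega^\prime)$. Your explicit checks that $p$ is well defined on path components and that the strict cocycle maps to its namesake in $H(x,y)$ are points the paper leaves implicit, but they are correct and the bookkeeping goes through exactly as you describe.
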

\begin{proof}
The displayed composite $c = p\pi_0(\omega^\prime)\pi_0(\omega)$ factors as
\begin{displaymath}
\pi_0 H_{\bihyp}(x, y) \xrightarrow{\pi_0(j^\prime)} \pi_0 H(x, y)
\xrightarrow{\cong} [x, y]
\end{displaymath}
where $j^\prime: H_{\bihyp}(x, y) \to H(x, y)$ is the inclusion functor and the
second arrow is the canonical map sending any cocycle $(f, g)$ to the composite
$gf^{-1}$ in the homotopy category. As $j^\prime = i^{\prime\prime\prime}i$,
$\pi_0(j^\prime)$ is a bijection by Corollary \ref{iprime} and Lemma \ref{bhlem}
so that $c$ is also a bijection. This implies that the canonical map $p$ is a
surjection and that $\pi_0(\omega)$ is an injection, hence a bijection. But
then $p\pi_0(\omega^\prime)$ is also a bijection, so $\pi_0(\omega^\prime)$ is an
injection, hence a bijection, and thus $p$ is also a bijection, as was to be
shown.
\end{proof}

One may then establish statements such as
\begin{prop} \label{verd1}
Suppose $\mc{C}$ is a small Grothendieck site, $H$ a sheaf of groups on $\mc{C}$,
$n \geq 0$ if $H$ is abelian or $0 \leq n \leq 1$ otherwise, and $X$ a simplicial
sheaf on $\mc{C}$. Then there are canonical bijections
\begin{displaymath}
H^n(X, H) \cong \colim_{p:\, d(U) \to X}{H^n(dU, H)}
\end{displaymath}
where the colimit is indexed over the category of bisimplicial hypercovers of $X$
and fibrewise simplicial homotopy classes of maps between them. If $X$ is
representable then these extend to bijections
\begin{displaymath}
H^n(\mc{C}/X, H_{|X}) \cong \colim_{p:\, d(U) \to X}{H^n(dU, H)}
\end{displaymath}
where $H_{|X}$ is defined by $H_{|?}(U \to X_n) \deq H(U)$.
\end{prop}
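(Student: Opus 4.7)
The plan is to reduce the statement to Lemma \ref{lotsobijs} by representing the cohomology functor $H^n(-, H)$ by a suitable locally fibrant simplicial sheaf $y$ on $\mc{C}$. In the abelian case I would take $y$ to be a globally fibrant replacement of the sheaf-theoretic Eilenberg--MacLane object $K(H, n)$; in the nonabelian case $n = 1$, a globally fibrant replacement of the classifying object $\ovr{W}H$; and in the case $n = 0$, the sheaf of groups $H$ itself, regarded as a discrete simplicial sheaf. In each case the chosen $y$ is globally fibrant (hence locally fibrant), so Lemma \ref{lotsobijs} applies in its unpointed form (the case $\Omega = \emptyset$ explicitly allowed in the statement).

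Next, I would invoke the standard representability of sheaf cohomology in Jardine's closed model structure on $\sShv{\mc{C}}$: for any simplicial sheaf $V$ there are canonical bijections $H^n(V, H) \cong [V, y]$, and since every simplicial sheaf is cofibrant and $y$ is globally fibrant these bijections are computed by simplicial homotopy classes, i.e.\ $\pi(V, y) \cong [V, y]$. In particular $H^n(X, H) \cong [X, y]$, and $H^n(dU, H) \cong \pi(dU, y)$ for every bisimplicial hypercover $U \to X$.

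Assembling these identifications with Lemma \ref{lotsobijs} gives
\begin{displaymath}
H^n(X, H) \cong [X, y] \cong \pi_0 H_{\bihyp}^{h^\prime}(X, y) \cong \colim_{d(f):\, d(u) \to X} \pi(d(u), y) \cong \colim_{p:\, d(U) \to X} H^n(dU, H),
\end{displaymath}
establishing the first asserted bijection. For the second statement, when $X$ is representable I would appeal to the standard change-of-site identification $H^n(\mc{C}/X, H_{|X}) \cong H^n(X, H)$: a globally fibrant model of $H$ on $\mc{C}$ restricts to a globally fibrant model of $H_{|X}$ on $\mc{C}/X$, and cohomology of the terminal object in $\sShv{\mc{C}/X}$ is naturally identified with cohomology of $X$ in $\sShv{\mc{C}}$. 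Composing this with the first bijection yields the second.

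The main obstacle is really just bookkeeping: one must verify that the unpointed variant of Lemma \ref{lotsobijs} applies to the chosen $y$, and in the nonabelian case confirm that simplicial homotopy classes into a fibrant model of $\ovr{W}H$ genuinely compute $H^1(-, H)$ on simplicial sheaves. No new homotopical or geometric input is required beyond the machinery developed in Section 4.
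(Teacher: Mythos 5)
Your proposal is correct and follows essentially the same route as the paper: both identify $H^n(X,H)$ with $[X, y]$ for a globally fibrant model $y$ of the relevant Eilenberg--MacLane or classifying object, apply Lemma \ref{lotsobijs} together with the identification of $\pi_0 H_{\bihyp}^{h^\prime}(X, y)$ as $\colim_{p:\, d(U) \to X}\pi(dU, y)$, and then recognize the terms of the colimit as $H^n(dU, H)$; the representable case is handled by the same change-of-site identification (the paper cites the proof of Theorem $3.10$ of \cite{Jardine3} for this). Your explicit separation of the nonabelian $n=1$ case via $\ovr{W}H$ is merely a more careful spelling-out of what the paper's uniform notation $K(H,n)$ encodes.
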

\begin{proof}
There is a series of identifications
\begin{eqnarray*}
H^n(X, H) & \deq  & [X, K(H, n)] \\
	  & \cong & \colim_{p:\, d(U) \to X}{\pi(dU, GK(H, n))} \\
	  & \cong & \colim_{p:\, d(U) \to X}{[dU, GK(H, n)]} \\
	  &  =    & \colim_{p:\, d(U) \to X}{H^n(dU, H)}
\end{eqnarray*}
where $GK(H, n)$ is a globally fibrant model for $K(H, n)$, the first isomorphism
is by Lemma \ref{lotsobijs} and the identification of
$\pi_0 H_{\bihyp}^{h^\prime}(X, GK(H, n))$, and the final identification is by
definition of cohomology of $dU$ with coefficients in $H$. The latter statement
follows from the proof of Theorem $3.10$ of \cite{Jardine3}.
\end{proof}
One may compare this with the statement ($3.8$, \cite{Friedlander1}), keeping in
mind that the bisimplicial hypercovers $U \to X$ here are \emph{not} assumed to be
representable.

\subsection{\'Etale homotopy types from bisimplicial hypercovers}

To return to the \'etale homotopy type $T(X, z)$ defined in \S\ref{secthyp}, one
is motivated by the above results to consider the diagram $T_b(X, z)$ for any
pointed connected simplicial sheaf $x = (X, z)$ on a pointed locally connected
small Grothendieck site $\mc{C}$ given by the simplicial sets $\ccomp d(u)$ for
pointed bisimplicial hypercovers $u \to x$, with maps induced by the pointed
fibrewise simplicial homotopy classes of maps between the bisimplicial hypercovers
over $x$. That this determines a pro-object in $\Ho{\sSet_\ast}$ is a consequence
of the functoriality of the construction of equalizers for simplicial homotopy
classes of maps after noting that the sheaves $X_m$ are globally, hence locally,
fibrant.

The following result, which is the main point of this work, gives the relationship
between $T(X, z)$ and $T_b(X, z)$ for locally fibrant $X$:
\begin{thm} \label{mainthm}
Suppose $\mc{C}$ is a pointed locally connected small Grothendieck site such that
the point $\Set \to \Shv{\mc{C}}$ is determined by some object $\Omega$ representing
a sheaf, and $x = (X, z)$ a pointed connected locally fibrant simplicial sheaf
on $\mc{C}$. Then the pro-object $T(X, z)$ of $\Ho{\sSet_\ast}$ is canonically
pro-isomorphic to the pro-object $T_b(X, z)$, and similarly for the unpointed
variants in $\Ho{\sSet}$. Furthermore, these pro-isomorphisms
$T(X, z) \cong T_b(X, z)$ are functorial in $(X, z)$, and similarly for the
unpointed case.
\end{thm}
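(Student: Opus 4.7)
The plan is to establish the pro-iso by a universal-property argument: two pro-objects in a category $\mc{D}$ are canonically isomorphic iff they co-represent the same functor $\mc{D} \to \Set$, so it suffices to show that $T(X,z)$ and $T_b(X,z)$ co-represent the same functor on $\Ho{\sSet_\ast}$. The common target functor will be $Y \mapsto [x, \Gamma^\ast Y]_{\Ho{\Omega/\sShv{\mc{C}}}}$, and the bridge from each pro-object to it is furnished by the cocycle machinery developed in the previous section.

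Let $Y$ be a pointed Kan complex. Then $\Gamma^\ast Y$ is pointed locally fibrant (horn-filling in $\Gamma^\ast Y$ reduces sectionwise to horn-filling in $Y$), and since $\ccomp \dashv \Gamma^\ast$ and $\ccomp$ preserves the simplicial tensor $w \otimes K$ by simplicial sets (degreewise a coproduct of copies of $w_n$ indexed by $K_n$), one obtains a natural identification
\begin{displaymath}
\pi(w, \Gamma^\ast Y) \;\cong\; \pi(\ccomp(w), Y) \;=\; [\ccomp(w), Y]_\ast
\end{displaymath}
of pointed simplicial homotopy classes, where the final equality uses Kan-ness of $Y$. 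Applying this to the defining diagrams of $T_b(X,z)$ and $T(X,z)$ produces
\begin{displaymath}
\Hom_{\pro{\Ho{\sSet_\ast}}}(T_b(X,z),\,Y) \;\cong\; \colim_{u \to x} \pi(d(u), \Gamma^\ast Y) \;=\; \pi_0 H^{h^\prime}_{\bihyp}(x, \Gamma^\ast Y)
\end{displaymath}
and, analogously,
\begin{displaymath}
\Hom_{\pro{\Ho{\sSet_\ast}}}(T(X,z),\,Y) \;\cong\; \colim_{v \to x} \pi(v, \Gamma^\ast Y) \;=\; \pi_0 H^{h^\prime}_{\hyp}(x, \Gamma^\ast Y),
\end{displaymath}
where the first colimit ranges over pointed bisimplicial hypercovers and the second over the $\HR_\ast(X,z)$ diagram.

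Lemma \ref{lotsobijs} identifies the first right-hand side with $[x, \Gamma^\ast Y]_{\Ho{\Omega/\sShv{\mc{C}}}}$, while the hypercover analog of Lemma \ref{lotsobijs}---obtained by combining Theorem \ref{thmbijs} with Lemma $1$ of \cite{Jardine13}---gives the matching bijection for the second. Since every object of $\Ho{\sSet_\ast}$ admits a pointed Kan model, the universal property of pro-objects then yields a canonical pro-iso $T(X,z) \cong T_b(X,z)$ in $\Ho{\sSet_\ast}$. Functoriality in $(X,z)$ follows from the obvious naturality of the pullback functors on both cocycle diagrams under pointed maps of simplicial sheaves, which induces a natural transformation of the co-represented functors compatible with the bijections above; the unpointed variant is obtained by dropping basepoints and taking $\Omega = \emptyset$ in Lemma \ref{lotsobijs}.

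The principal obstacle will be confirming that the cocycle bijection of Lemma \ref{lotsobijs} is genuinely natural in the target locally fibrant simplicial sheaf, so that the composite identifications above are natural in $Y$ and the universal-property argument applies. This reduces to tracing through the definitions of $\pi_0(\omega)$, $\pi_0(\omega^\prime)$, and $p$ in that lemma and checking compatibility with post-composition by pointed maps of locally fibrant targets---essentially bookkeeping once the explicit constructions are written down.
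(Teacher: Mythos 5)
Your proposal is correct and follows essentially the same route as the paper: both arguments show that $[T(X,z),-]$ and $[T_b(X,z),-]$ are canonically naturally isomorphic to $[x,\Gamma^\ast(-)]$ via the adjunction $\ccomp \dashv \Gamma^\ast$, the cocycle-category bijections of Theorem \ref{thmbijs} and Lemma \ref{lotsobijs}, and a fibrant replacement of the test object $y$, then deduce the pro-isomorphism from the universal property and verify functoriality by chasing pulled-back cocycles. The naturality-in-$y$ point you flag at the end is indeed the only bookkeeping the paper also leaves implicit.
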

\begin{proof}
For the first part it suffices to give a canonical natural isomorphism between the
functors that these pro-objects pro-represent. On the one hand one has canonical
isomorphisms
\begin{eqnarray*}
[T_b(X, z), y] & \deq & \colim_{p:\, d(u) \to x}{[\ccomp d(u), y]} \\
  & \cong & \colim_{p:\, d(u) \to x}{\pi(\ccomp d(u), y)} \\
  & \cong & \colim_{p:\, d(u) \to x}{\pi(d(u), \Gamma^\ast y)} \\
  & \cong & \pi_0 H(x, \Gamma^\ast y) \\
  & \cong & [x, \Gamma^\ast y]
\end{eqnarray*}
natural in $y$ by Theorem \ref{thmbijs} and Lemma \ref{lotsobijs}, where one may
assume $y$ is fibrant by applying the $\Exinfn$ functor. By a similar calculation
one has $[T(X, z), y] \cong [x, \Gamma^\ast y]$ naturally in $y$. Therefore there
are canonical natural bijections $[T_b(X, z), y] \cong [T(X, z), y]$, so the
functors $[T_b(X, z), -]$ and $[T(X, z), -]$ are canonically naturally isomorphic 
and thus the representing pro-objects are canonically pro-isomorphic, as was to be
shown. Forgetting the points yields the analogous statement for the unpointed case.

For functoriality, observe that the cocycle categories
$H_{\bihyp}(x, \Gamma^\ast y)$ and $H_{\hyp}(x, \Gamma^\ast y)$ determine
contravariant functors $f^\ast$ in $x$ by sending any cocycle of the form
\begin{displaymath}
x \xleftarrow{d(p)} d(u) \xrightarrow{q} \Gamma^\ast y
\end{displaymath}
to the cocycle
$(d(p_\ast), q \cdot d(\pr_u)): d(u^\prime) \to x^\prime \times \Gamma^\ast y$
induced by pulling back $p$ to $p_\ast$ along any pointed map $f: x^\prime \to x$
of pointed simplicial sheaves, and similarly for $H_{\hyp}$. Consider the diagram
\begin{displaymath}
\xymatrix@=20pt{ [T_b(x), y] \ar[dd]_{\cong} \ar[dr]^\cong \ar[rr]^{f^\ast}
                   &  & [T_b(x^\prime), y] \ar[dr]^{\cong} \ar'[d][dd]^(0.4){\cong}
                   &  \\
                   & \pi_0 H_{\bihyp}(x, \Gamma^\ast y) \ar[rr] \ar[dl]_{\cong}
                   &  &  \pi_0 H_{\bihyp}(x^\prime, \Gamma^\ast y)
	           \ar[dl]^{\cong} \\
	   [x, \Gamma^\ast y] \ar[rr]^{f^\ast}  & &  [x^\prime, \Gamma^\ast y] }
\end{displaymath}
where the two triangles on either end are canonical factorizations of the canonical
natural isomorphisms above. The back square commutes if the top and the front
squares commute. Any element of the set
\begin{displaymath}
\colim_{p:\, d(u) \to x}{\pi(d(u), \Gamma^\ast y)}
\end{displaymath}
is represented by some pair $(p: u \to x, [q]: d(u) \to \Gamma^\ast y)$ where $p$
is a pointed bisimplicial hypercover, and $f^\ast$ (precomposition with $f$) on
$[T_b(x), y]$ induces a map $f^\ast$ sending such a pair to the element of
\begin{displaymath}
\colim_{p:\, d(u) \to x^\prime}{\pi(d(u), \Gamma^\ast y)}
\end{displaymath}
represented by the pair $(p_\ast: u^\prime \to x^\prime, [q \cdot d(\pr_u)])$;
clearly this is compatible with the functor $\pi_0 H_{\bihyp}(-, \Gamma^\ast y)$
so the top square commutes. Chasing the cocycle $(d(p), q)$ around the front square
gives
\begin{displaymath}
(d(p), q) \mapsto [qd(p)^{-1}] \mapsto [qd(p)^{-1}f]
\end{displaymath}
on the left side and
\begin{displaymath}
(d(p), q) \mapsto (d(p_\ast), q \cdot d(\pr_u)) \mapsto [qd(\pr_u)d(p_\ast)^{-1}]
\end{displaymath}
on the other, but these are equal since $d(p)d(\pr_u) = fd(p_\ast)$ by definition
of $p_\ast$. Thus the front square also commutes, so the back square commutes. A
similar computation with $\pi_0 H_{\hyp}(-, \Gamma^\ast y)$ shows that the analogous
square commutes, giving the functoriality in $x$. The unpointed case follows by
the same argument.
\end{proof}

To move towards a statement about the invariance of $T_b(X, z)$ under local weak
equivalences, there is
\begin{lem} \label{exinf}
Under the same assumptions on $\mc{C}$ as in Theorem \ref{mainthm}, the pro-object
$T_b(X, z)$ associated to any pointed connected simplicial sheaf $x = (X, z)$ on
$\mc{C}$ is canonically pro-isomorphic to the pro-object $T_b(\Exinf(X), z^\prime)$
in $\Ho{\sSet_\ast}$ where $z^\prime$ is induced by the canonical sectionwise weak
equivalence $\eta_X: X \to \Exinf(X)$, and similarly for the unpointed variants in
$\Ho{\sSet}$. Furthermore, these pro-isomorphisms
$T_b(X, z) \cong T_b(\Exinf(X), z^\prime)$ are functorial in $(X, z)$, and
similarly for the unpointed case.
\end{lem}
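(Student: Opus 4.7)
The plan is to observe that the representability argument used in the proof of Theorem \ref{mainthm} never really relied on local fibrancy of $X$, only on local fibrancy of the target $\Gamma^\ast y$, and then to exploit the local weak equivalence $\eta_X: X \to \Exinf(X)$ to compare the two pro-objects.

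First I would verify that $T_b(X, z)$ pro-represents the functor $y \mapsto [x, \Gamma^\ast y]$ on $\Ho{\sSet_\ast}$ for \emph{any} pointed connected simplicial sheaf $x = (X, z)$, without a local fibrancy hypothesis on $X$. Replacing $y$ by a fibrant model via $\Exinfn$ one may assume $y$ is fibrant, so that $\Gamma^\ast y$ is sectionwise, hence locally, fibrant. The same chain of natural bijections used in the proof of Theorem \ref{mainthm},
\begin{eqnarray*}
[T_b(X, z), y] & \deq & \colim_{p:\, d(u) \to x}{[\ccomp d(u), y]} \\
  & \cong & \colim_{p:\, d(u) \to x}{\pi(\ccomp d(u), y)} \\
  & \cong & \colim_{p:\, d(u) \to x}{\pi(d(u), \Gamma^\ast y)} \\
  & = & \pi_0 H_{\bihyp}^{h^\prime}(x, \Gamma^\ast y) \\
  & \cong & [x, \Gamma^\ast y],
\end{eqnarray*}
still applies: the use of Lemma \ref{lotsobijs} and Theorem \ref{thmbijs} in the last bijection requires only that the target $\Gamma^\ast y$ be locally fibrant, which it is. Thus $T_b(X, z)$ pro-represents $[x, \Gamma^\ast y]$.

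Applying the same calculation with $(\Exinf(X), z^\prime)$ in place of $(X, z)$ yields a natural isomorphism $[T_b(\Exinf(X), z^\prime), y] \cong [\Exinf(X), \Gamma^\ast y]$ in $y$. The canonical map $\eta_X: X \to \Exinf(X)$ is a sectionwise, hence local, weak equivalence, and it is pointed by construction of $z^\prime$, so it induces a natural bijection $[\Exinf(X), \Gamma^\ast y] \cong [x, \Gamma^\ast y]$ in $\Ho{\Omega/\sShv{\mc{C}}}$. Composing these three natural bijections gives a canonical natural isomorphism of representable functors, from which the canonical pro-isomorphism $T_b(X, z) \cong T_b(\Exinf(X), z^\prime)$ in $\Ho{\sSet_\ast}$ follows. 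Forgetting the points gives the unpointed statement. For functoriality in $(X, z)$, a pointed map $f: (X, z) \to (X^\prime, z_0)$ together with the naturality of $\eta$ produces the commutative square relating $\eta_X$, $\eta_{X^\prime}$, $f$, and $\Exinf(f)$; passing through the representability chain above yields commutativity of the corresponding square of pro-isomorphisms, exactly as in the functoriality argument at the end of the proof of Theorem \ref{mainthm}.

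The main obstacle is verifying that every step in the representability chain really goes through without local fibrancy of $X$. Inspecting the proofs of Theorem \ref{thmbijs} and Lemma \ref{lotsobijs}, the local fibrancy hypothesis is only ever used on the right-hand object of the cocycle categories; the source object $x$ is arbitrary throughout. Once this is confirmed, the rest of the argument is a direct comparison of representable functors via the local weak equivalence $\eta_X$, and no fibrancy assumption on $X$ is required.
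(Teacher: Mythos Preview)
Your proposal is correct and follows essentially the same route as the paper: both arguments hinge on the observation that the chain of bijections from Theorem \ref{thmbijs} and Lemma \ref{lotsobijs} requires local fibrancy only of the target $\Gamma^\ast y$, not of the source $x$, so that $T_b(X,z)$ pro-represents $[x,\Gamma^\ast y]$ regardless of whether $X$ is locally fibrant; the comparison with $T_b(\Exinf(X),z^\prime)$ then falls out of the local weak equivalence $\eta_X$. The paper's functoriality verification is somewhat more explicit than yours (it writes out and chases the relevant square in $\pi_0 H_{\bihyp}(\Exinf(-),\Gamma^\ast y)$), but your appeal to the naturality of $\eta$ together with the functoriality argument already carried out in Theorem \ref{mainthm} is adequate.
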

\begin{proof}
For the first statement, observe that there are canonical bijections
\begin{eqnarray*}
[T_b(\Exinf(X), z^\prime), y]
 & \deq & \colim_{p:\,d(u) \to \Exinf(X)}{[\ccomp d(u), y]} \\
 & \cong & \pi_0 H^{h^\prime}_{\bihyp}(\Exinf(X, z^\prime), \Gamma^\ast y) \\
 & \cong & \pi_0 H(\Exinf(X, z^\prime), \Gamma^\ast y) \\
 & \cong & [\Exinf(X, z^\prime), \Gamma^\ast y] \\
 & \cong & [x, \Gamma^\ast y] \\
 & \cong & [T_b(X, z), y]
\end{eqnarray*}
natural in $y$ for any fibrant simplicial set $y$ by Lemma \ref{lotsobijs},
Theorem \ref{thmbijs}, Lemma $1$ of \cite{Jardine4}, and Theorem $1$ of
\cite{Jardine4}. These naturally extend to canonical bijections for arbitrary
$y$ via $\Exinfn$. After the proof of Theorem \ref{mainthm}, establishing
functoriality in $x$ reduces to verifying that the square
\begin{displaymath}
\xymatrix{ \pi_0 H_{\bihyp}(\Exinf(x), \Gamma^\ast y) \ar[r]^{f^\ast} \ar[d]_{\cong}  &  \pi_0 H_{\bihyp}(\Exinf(x^\prime),
             \Gamma^\ast y) \ar[d]^{\cong} \\
	    [x, \Gamma^\ast y] \ar[r]^{f^\ast}  &  [x^\prime, \Gamma^\ast y] }
\end{displaymath}
commutes. Chasing a cocycle $(d(p), q): d(u) \to \Exinf(x) \times \Gamma^\ast y$
around this square gives
\begin{displaymath}
(d(p), q) \mapsto [q d(p)^{-1} \eta_x] \mapsto [q d(p)^{-1} \eta_x f]
\end{displaymath}
on the left side and
\begin{displaymath}
(d(p), q) \mapsto (d(p_\ast), q \cdot d(\pr_u))
   \mapsto [q \cdot d(\pr_u) d(p_\ast)^{-1} \eta_{x^\prime}]
\end{displaymath}
on the right side, but these are equal by the definition of $p_\ast$ and the
fact that $\Exinfn$ is a functor.
\end{proof}

The key point here is that $T_b(X, z)$ exists without the requirement that $X$ be
locally fibrant, and the cocycle category techniques of \cite{Jardine4} make no
fibrancy assumptions on $X$. Here is the major consequence:

\begin{cor} \label{maincor}
With the same hypotheses as Theorem \ref{mainthm} on $\mc{C}$, suppose $x = (X, z)$
and $y = (Y, y)$ are pointed connected simplicial sheaves on $\mc{C}$, and
$f: (Y, y) \to (X, z)$ a pointed local weak equivalence. Then the strict morphism
of pro-objects $f_\ast: T_b(Y, y) \to T_b(X, z)$ induced from $f$ by pulling back
bisimplicial hypercovers along $f$ is a pro-isomorphism, and similarly for the
unpointed setting.
\end{cor}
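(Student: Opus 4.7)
The plan is to reduce the statement to the locally fibrant setting, where Proposition \ref{invwe} already supplies invariance of $T(-, -)$ under pointed local weak equivalences, and then transport the resulting pro-isomorphism back to $T_b$ via the functorial identifications of Theorem \ref{mainthm} and Lemma \ref{exinf}. The essential leverage is that Lemma \ref{exinf} imposes \emph{no} fibrancy hypothesis on $X$, so one is free to replace both $X$ and $Y$ with sectionwise fibrant models via $\Exinfn$.

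Specifically, I would first form $\Exinf(f): (\Exinf(Y), y^\prime) \to (\Exinf(X), z^\prime)$ with $y^\prime \deq \eta_Y(y)$ and $z^\prime \deq \eta_X(z)$. Since $\eta_Y$ and $\eta_X$ are sectionwise (hence local) weak equivalences and $f$ is a pointed local weak equivalence, 2-of-3 makes $\Exinf(f)$ a pointed local weak equivalence. Both $\Exinf(Y)$ and $\Exinf(X)$ are sectionwise fibrant, hence locally fibrant, and connectedness passes from $X, Y$ to $\Exinf(X), \Exinf(Y)$ by the standard $H^0(-, K(\Gamma^\ast S, 0))$ argument applied to local weak equivalences. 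Proposition \ref{invwe} therefore yields a pro-isomorphism
\[
\Exinf(f)_\ast : T(\Exinf(Y), y^\prime) \xrightarrow{\cong} T(\Exinf(X), z^\prime)
\]
in $\Ho{\sSet_\ast}$. Theorem \ref{mainthm} applied at $\Exinf(Y)$ and $\Exinf(X)$, combined with its functoriality clause in the pointed map $\Exinf(f)$, then transports this into a pro-isomorphism $\Exinf(f)_\ast : T_b(\Exinf(Y), y^\prime) \to T_b(\Exinf(X), z^\prime)$.

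Finally, Lemma \ref{exinf} applied functorially to $f$ supplies the commutative square
\[
\xymatrix{
T_b(Y, y) \ar[r]^{f_\ast} \ar[d]_{\cong}  &  T_b(X, z) \ar[d]^{\cong} \\
T_b(\Exinf(Y), y^\prime) \ar[r]^{\Exinf(f)_\ast}  &  T_b(\Exinf(X), z^\prime)
}
\]
whose vertical maps and bottom horizontal map are pro-isomorphisms, so the top map $f_\ast$ is also a pro-isomorphism. The unpointed case follows identically via the unpointed variants of all three ingredients. The main obstacle is purely bookkeeping: one must be confident that the functoriality clauses of Theorem \ref{mainthm} and Lemma \ref{exinf} genuinely intertwine the pullback-style morphism $f_\ast$ on $T_b$ with the map induced on $T$ after $\Exinfn$. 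Those clauses precisely assert this, so the verification reduces to pasting commutative squares together, and in particular no pro-Whitehead theorem is required --- in keeping with the paper's stated philosophy.
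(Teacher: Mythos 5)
Your proposal is correct and follows essentially the same route as the paper's proof: replace $X$ and $Y$ by $\Exinf$-models, apply Proposition \ref{invwe} on the $T$-side, and transport back through the functoriality clauses of Theorem \ref{mainthm} and Lemma \ref{exinf} by pasting the two commutative squares. The only (cosmetic) difference is that the paper runs the argument on the represented functors $[T_b(-), -]$ rather than directly on the pro-objects.
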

\begin{proof}
The pointed map $f$ induces a pointed map $\eta_f: \Exinf(Y) \to \Exinf(X)$ by
functoriality of $\Exinfn$. As $\Exinf(X)$ and $\Exinf(Y)$ are locally fibrant and
connected there are canonical natural isomorphisms
\begin{displaymath}
[T_b(\Exinf(X)), -] \cong [T(\Exinf(X)), -]
\end{displaymath}
and similarly for $\Exinf(Y)$ by Theorem \ref{mainthm}. Then there is a commutative
diagram
\begin{displaymath}
\xymatrix{ [T_b(X), -] \ar[d]^{f^\ast}  &  [T_b(\Exinf(X)), -] \ar[l]_(0.55){\eta_x^\ast} \ar[r]^{\cong} \ar[d]^{f^\ast} &
              [T(\Exinf(X)), -] \ar[d]^{\eta_f^\ast} \\
           [T_b(Y), -]  &  [T_b(\Exinf(Y)), -] \ar[l]^(0.55){\eta_y^\ast} \ar[r]_{\cong} &
	      [T(\Exinf(Y)), -] }
\end{displaymath}
where $\eta_f^\ast$ is a natural isomorphism by Proposition \ref{invwe}. The
righthand square is commutative by the functoriality in Theorem \ref{mainthm} so
the middle $f^\ast$ is also a natural isomorphism. The lefthand square commutes
by the functoriality in Lemma \ref{exinf} and the maps $\eta_x^\ast$ and
$\eta_y^\ast$ are natural isomorphisms since $\eta_x$ and $\eta_y$ are local weak
equivalences, so the map $f^\ast$ on the left is a natural isomorphism. This map
$f^\ast$ was induced by precomposition with the map
$f_\ast: T_b(Y, y) \to T_b(X, z)$ induced by $f$ itself, so $f_\ast$ is a
pro-isomorphism, as was to be shown.
\end{proof}

\bibliographystyle{plain}
\bibliography{bsimp}

\end{document}